\documentclass[a4paper,11pt]{amsart}

\usepackage{fullpage}
\usepackage{amsmath,amsthm,amssymb,mathtools,mathrsfs}
\usepackage{hyperref}

\usepackage{chessfss}
\usepackage{tikz-cd}
\usepackage{tikz}
\usetikzlibrary{intersections,calc,arrows.meta}

\usepackage{graphicx}
\usepackage{here}
\usepackage{time}

\usepackage{xcolor}
\usepackage[capitalize,nameinlink,noabbrev,nosort]{cleveref}
\hypersetup{
	colorlinks=true,
	linkcolor=blue,
	citecolor=red,
	filecolor=brown,
	urlcolor=brown,
}

\makeatletter
\@namedef{subjclassname@2020}{%
  \textup{2020} Mathematics Subject Classification}
\makeatother

\newtheorem{theoremcounter}{Theorem Counter}[section]

\theoremstyle{definition}
\newtheorem{dfn}[theoremcounter]{Definition}
\newtheorem{remark}[theoremcounter]{Remark}

\theoremstyle{plain}
\newtheorem{lem}[theoremcounter]{Lemma}
\newtheorem{proposition}[theoremcounter]{Proposition}
\newtheorem{corollary}[theoremcounter]{Corollary}

\newtheorem{thm}[theoremcounter]{Theorem}

\numberwithin{equation}{section}

\newcommand{\Z}{\mathbb{Z}}
\newcommand{\Q}{\mathbb{Q}}
\newcommand{\R}{\mathbb{R}}

\newcommand{\Ker}{\operatorname{Ker}}

\DeclareMathOperator{\ImNew}{Im}
\renewcommand{\Im}{\ImNew}

\newcommand{\mc}[1]{\mathcal{#1}}

\DeclareMathOperator{\Res}{Res}
\DeclareMathOperator{\Spec}{Spec}

\DeclareMathOperator{\lcm}{lcm}

\begin{document}

\title[]{Non-vanishing of the $p$-adic constant for mock modular forms associated to a newform with real Fourier coefficients} 

\author{Ryota Tajima} 
\address{Faculty of Mathematics, Kyushu University, 744 Motooka, Nishi-ku, Fukuoka, 819-0395, Japan}
\email{ryota.tajima.123@gmail.com}


\maketitle

\begin{abstract}
Let $F^{+}$ be a mock modular form associated to a normalized newform $g$. K. Bringmann et~al.\@ \cite{bringmann2012mock} obtained a $p$-adic modular form starting from $F^{+}$ by adding a suitable linear combination of Eichler integrals of $g(q)$ and $g(q^{p})$. We denote the coefficients of the Eichler integrals of $g(q)$ and $g(q^{p})$ by $\gamma_{g}$ and $\delta_{g}$. These constants are important in the $p$-adic theory of mock modular forms, but relatively little is known about them at present. For instance, K. Bringmann et~al.\@ \cite{bringmann2012mock} raised the question of whether $\delta_{g}$ vanishes when $g$ has CM by an imaginary quadratic field in which $p$ is inert. In previous work, the non-vanishing of $\delta_{g}$ has been proved mainly when $g$ is associated to an elliptic curve. In higher weight, only one example was known for which $\delta_{g}\neq 0$. In this paper, we show that $\delta_{g}\neq 0$ under mild assumptions when all the Fourier coefficients of $g \in S_{k}(\Gamma_{0}(N), \chi)$ are real, without assuming that $g$ has CM. In particular, this provides a class of higher-weight examples for which $\delta_{g}\neq 0$.

\end{abstract}

\section{Introduction}
Modular forms have a rich theory and are related to several objects, such as the sum of divisors, elliptic curves, and Galois representations. In 1920, Ramanujan discovered a class of functions that is similar to modular forms, and he called them \emph{mock theta functions}. After this, S. Zwegers \cite{zwegers2008mock} incorporated mock theta functions into the theory of harmonic Maass forms. In simple terms, a harmonic Maass form $F$ is a real-analytic function that satisfies the modular transformation law (see Definition \ref{def of harmonic maass}). A harmonic Maass form $F$ is written as the sum of the \emph{holomorphic part} $F^{+}$ and the \emph{non-holomorphic part} $F^{-}$. It is known that Ramanujan's mock theta function can be regarded as the holomorphic part of a harmonic Maass form. Therefore, we call the holomorphic part of a harmonic Maass form \emph{mock modular form} following D. Zagier \cite{zagier2009ramanujan}.

We recall how to associate mock modular forms to a normalized newform $g \in S_{k}(\Gamma_{0}(N), \chi, K)$, where $K$ is a number field following \cite{bruinier2008differential}. In general, coefficients of mock modular forms seem to be transcendental. Therefore, it appears to be challenging to relate mock modular forms to other number-theoretic topics. However, J. H. Bruinier, K. Ono, and R. C. Rhoades \cite{bruinier2008differential} attached a normalized newform $g$ to mock modular forms which have algebraic properties. Let $\xi_{2-k}:=2iy^{2-k}\overline{( \partial/\partial \overline{\tau})}$, where $\tau \in \mathbb{H}$ and $\tau=x+iy$. Then, $\xi_{2-k}$ gives a surjective map from harmonic Maass forms to cusp forms. Hence, we consider lifts of $g$ by $\xi_{2-k}$. J. H. Bruinier et~al.\@  \cite{bruinier2008differential} showed that there exist lifts of $g$ that satisfy certain algebraic properties, and they call them \emph{good for $g$}. (See Definition \ref{good on Gamma1}.)  In this paper, we also call its holomorphic part a \emph{good lift of $g$}. We note that a good lift of $g$ is a mock modular form. A remarkable property of a good lift $F^{+}$ is that we can modify it to a $K$-coefficient $q$-series. We explain it more precisely. For a $q$-series $h(q)=\sum_{n>0}a_{h}(n)q^{n}$, we define the \emph{Eichler integral} $E_{h}(q)$ by a $q$-series $\sum_{n>0} n^{1-k}a_{h}(n)q^{n}$. Then, the following $q$-series is in $K(\!(q)\!)$:
\begin{align*}
F^{+}(q)-a_{F^{+}}(1) E_{g}(q)= \sum _{n\gg -\infty}\left( a_{F^{+}}(n) -a_{F^{+}}(1)n^{1-k}a_{g}(n) \right) q^{n}.
\end{align*}

Next, we introduce the $p$-adic constants $\gamma_{g}$ and $\delta_{g}$ associated to a good lift. Let $g \in S_{k}(\Gamma_{0}(N), \chi, K)$ be a normalized newform with real Fourier coefficients, i.e. $a_{g}(n) \in \mathbb{R}$ for all $n>0$. We take a good lift of $g$ and denote it by $F^{+}$. For a complex number $a_{F^{+}}(1)$, we define $a_{F^{+}}(1)+K_{p}$ to be a set of all formal pairs $(a_{F^{+}}(1), \alpha)$ where $\alpha \in K_{p}$. We denote an element $(a_{F^{+}}(1), \alpha)$ by $a_{F^{+}}(1)+\alpha$. We consider a $2$-dimensional $p$-adic cohomological space $M_{\mathrm{dR}, p}(g)$ attached to $g$. This space is equipped with an operator $V$ whose action on $q$-expansions is given by $q \mapsto q^{p}$.  K. Bringmann, P. Guerzhoy, and B. Kane \cite{bringmann2012mock} constructed the following $q$-series for $\gamma=a_{F^{+}}(1)+\alpha \in a_{F^{+}}(1)+K_{p}$ and $\delta \in K_{p}$:
 \begin{align*}
     \mathcal{F}_{\gamma, \delta} &\coloneqq F^{+}-\gamma E_{g(q)}-\delta E_{g(q^{p})}
     \\
     &\coloneq \left(F^{+}-a_{F^{+}}(1)E_{g(q)} \right)-\alpha E_{g(q)}-\delta E_{g(q^{p})}.
 \end{align*}
We note that all coefficients of $\mathcal{F}_{\gamma, \delta}$ are in $K_{p}$. $\mathcal{F}_{\gamma, \delta}$ is just a formal power series in general. However,  K. Bringmann et~al.\@ \cite{bringmann2012mock} and L. Candelori and F. Castella \cite{candelori2017geometric} showed that $\mc{F}_{\gamma_{g}, \delta_{g}}$ is a $q$-expansion of a $p$-adic modular form when $\{g, V(g)\}$ is a basis of $M_{\mathrm{dR}, p}(g)$. The author thinks that the $p$-adic modular form $\mc{F}_{\gamma_{g}, \delta_{g}}$ is the key to developing a $p$-adic theory of mock modular forms. Hence, we investigate the $p$-adic constants $\gamma_{g}$ and $\delta_{g}$ as a first step. Thanks to the previous work, it is known that $\gamma_{g}=0$ holds when $g$ has complex multiplication by an imaginary quadratic field in which $p$ is inert. In other words, the following $q$-series is a $q$-expansion of a $p$-adic modular form:
\begin{align*}
\mathcal{F}_{0, \delta_{g}}= F^{+}-\delta_{g} E_{g(q^{p})}.
     \end{align*}
In this case, the $p$-adic constant $\delta_{g}$ may be regarded as a correction term for a mock modular form to become a $p$-adic modular form. Hence, it is natural to ask whether $\delta_{g}$ vanishes. In fact, K. Bringmann et~al.\@ \cite{bringmann2012mock} said that ``It is an interesting question whether $\alpha=0$ ever occurs in Theorem 1.3." (See \cite[Remarks (2) following Theorem 1.3]{bringmann2012mock}.) Here $\alpha$ denotes $\delta_{g}$ in our notation. In this paper, we answer this question when $g$ has real Fourier coefficients. Moreover, we show that $\delta_{g}$ does not vanish without assuming that $g$ has complex multiplication. The following is the main theorem:
\begin{thm}
 Let $g \in S_{k}(\Gamma_{0}(N), \chi, K)$ be a normalized newform with real Fourier coefficients. Suppose that $p\nmid 6N$, and $N \geq 5$. We also assume that $\{g, V(g)\}$ is a basis of $M_{\mathrm{dR}, p}(g)$. Then, $\delta_{g} \not=0$ holds.
\end{thm}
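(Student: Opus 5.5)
The plan is to reformulate $\delta_{g}$ cohomologically and then show it is nonzero using the algebraic Poincaré pairing on $M_{\mathrm{dR},p}(g)$, evaluated through the Bruinier--Funke pairing.

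\textbf{Step 1: reformulation.} Put $f \coloneqq D^{k-1}F^{+}$ with $D=q\,\frac{d}{dq}$. This is a weakly holomorphic form of weight $k$ whose $n$-th coefficient is $n^{k-1}a_{F^{+}}(n)$. Since $D^{k-1}E_{h}=h$, we get
\begin{align*}
D^{k-1}\mathcal{F}_{\gamma,\delta} = \bigl(f - a_{F^{+}}(1)g\bigr) - \alpha g - \delta p^{1-k} V(g).
\end{align*}
The form $f_{0} \coloneqq f - a_{F^{+}}(1)g$ has coefficients in $K$ (this is the rationality property of good lifts). Hence it defines a class $[f_{0}]$ in the de Rham cohomology over $K$; with $N\geq 5$ and $p\nmid 6N$ the modular curve is a fine moduli space with good reduction at $p$, so this cohomology and its comparison with $M_{\mathrm{dR},p}(g)$ behave well. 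The characterization of $(\gamma_{g},\delta_{g})$ in \cite{bringmann2012mock,candelori2017geometric} says that $\mathcal{F}_{\gamma,\delta}$ is $p$-adic modular exactly when the right-hand side is $p$-adically exact. So, in the basis $\{g,V(g)\}$,
\begin{align*}
[f_{0}] = \alpha_{g}[g] + \delta_{g}p^{1-k}[V(g)],
\end{align*}
where $\gamma_g=a_{F^+}(1)+\alpha_g$. Therefore $\delta_{g}=0$ if and only if $[f_{0}]$ is a $K_{p}$-multiple of $[g]$.

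\textbf{Step 2: an alternating-pairing argument.} The Poincaré pairing $\langle\ ,\ \rangle$ on the $g$-isotypic part is alternating, so $\langle [g],[g]\rangle=0$. If $[f_{0}]=c[g]$, then $\langle [g],[f_{0}]\rangle=0$. It therefore suffices to prove
\begin{align*}
\langle [g],[f_{0}]\rangle = \langle [g],[f]\rangle \neq 0 .
\end{align*}
This is a statement in characteristic $0$ over $K$, which can be checked after any embedding. I would compute the pairing by the usual residue formula: take $\sum_{\text{cusps}}\operatorname{Res}(g\cdot \Phi)$, where $\Phi$ is a $(k-2)$-fold primitive of $f$, namely $F^{+}$ itself. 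This reduces the pairing, up to a nonzero constant depending only on $k$, to a finite sum over the principal parts of $F^{+}$ at the cusps, of the shape $\sum_{n>0}a_{g}(n)\,a_{F^{+}}(-n)$. The terms involving $a_{F^{+}}(1)$ drop out because $g$ is cuspidal.

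\textbf{Step 3: identifying the pairing (the main obstacle).} The key step is to identify this principal-part sum with the Bruinier--Funke pairing $\{g,F\}$. By the defining property of a good lift (Definition \ref{good on Gamma1}), $\xi_{2-k}F = g/\|g\|^{2}$. The Bruinier--Funke pairing then gives
\begin{align*}
\{g,F\} = (g,\xi_{2-k}F)_{\mathrm{Pet}} = 1 \neq 0 .
\end{align*}
The difficulty is that the Bruinier--Funke formula naturally involves $\overline{a_{g}(n)}$, or equivalently the form $g^{c}$ with conjugated coefficients, rather than $a_{g}(n)$. That is exactly why I need the hypothesis that all $a_{g}(n)$ are real: then $g^{c}=g$, and the algebraic residue pairing $\langle[g],[f]\rangle$ coincides with $\{g,F\}$ up to a nonzero factor.

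What remains is careful bookkeeping: the contributions from other cusps (handled by Atkin--Lehner/newform properties, with $\chi$ taken into account) and the normalizing constants relating de Rham cup product to residues. Once the pairing is shown to be nonzero, Step 2 gives that $[f_{0}]$ is not a multiple of $[g]$, and Step 1 then gives $\delta_{g}\neq 0$.
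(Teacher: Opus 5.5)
Your proposal is essentially the paper's proof: expand $[D^{k-1}\mathcal{F}_{a_{F^{+}}(1)}]=\alpha_{g}g+\delta_{g}V(g)$ in $M_{\mathrm{dR},p}(g)$, then rule out $\delta_{g}=0$ with a residue-theorem pairing for which $\langle g,g\rangle=0$ but $\langle D^{k-1}\mathcal{F}_{a_{F^{+}}(1)},g\rangle=(\xi_{2-k}F,g)=1$ by Bruinier--Funke (also, $D^{k-1}E_{g\vert V_{p}}=V(g)$ exactly, so your factor $p^{1-k}$ is spurious but harmless). One correction: the Bruinier--Funke identity involves the unconjugated coefficients $a_{g}^{(s)}(n)$ and equals $1$ for every newform, so reality is not what makes your Step 3 work; it is needed, via $T_{l}F=l^{1-k}\overline{a_{g}(l)}F+h_{l}$, to put $[D^{k-1}\mathcal{F}_{a_{F^{+}}(1)}]$ in $M_{\mathrm{dR}}(g)$ at all, which your Step 1 takes for granted when it expands this class in the basis $\{g,V(g)\}$.
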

As a corollary, we answer the question in \cite[Remarks (2) following Theorem 1.3]{bringmann2012mock} when $g$ has real Fourier coefficients.
\begin{corollary}
 Let $g \in S_{k}(\Gamma_{0}(N), \chi, K)$ be a normalized newform with real Fourier coefficients. Suppose that $p\nmid 6N$, and $N \geq 5$. We also assume that $g$ has complex multiplication by an imaginary quadratic field in which $p$ is inert. Then, $\delta_{g} \not=0$ holds.
\end{corollary}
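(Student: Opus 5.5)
The statement to prove is the Corollary, which should follow from the main Theorem once its hypotheses are verified. The only hypothesis of the Theorem not stated in the Corollary is that $\{g, V(g)\}$ is a basis of $M_{\mathrm{dR}, p}(g)$. So the plan is to show that, when $g$ has complex multiplication by an imaginary quadratic field $L$ in which $p$ is inert, $g$ and $V(g)$ are linearly independent in the two-dimensional space $M_{\mathrm{dR}, p}(g)$. The Theorem then gives $\delta_{g}\neq 0$ directly.

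First, I record the standard consequence of CM: if $p$ is inert in $L$, then $a_{g}(p)=0$, since $a_{g}(n)$ vanishes whenever $n$ is divisible by a prime inert in $L$ to an odd power. Hence the Hecke polynomial at $p$ is $X^{2}-a_{g}(p)X+\chi(p)p^{k-1}=X^{2}+\chi(p)p^{k-1}$. Its roots are $\pm\sqrt{-\chi(p)p^{k-1}}$, which are distinct because $p\nmid N$ and $p^{k-1}\neq 0$.

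Next, I relate $V$ to Frobenius on $M_{\mathrm{dR}, p}(g)$. I will use the description of the action of $V$ (Frobenius) from the setup of \cite{bringmann2012mock} and \cite{candelori2017geometric}. There $V$ satisfies the Hecke relation $V^{2}-a_{g}(p)p^{1-k}\,V+\chi(p)p^{1-k}=0$ up to the normalization $q\mapsto q^{p}$, or equivalently $U_{p}$ and $V$ interact via $U_{p}V=\mathrm{id}$ together with the Hecke relation. Suppose, for contradiction, that $V(g)=\lambda g$ in $M_{\mathrm{dR}, p}(g)$ for some $\lambda$. Then $g$ is an eigenvector of the semilinear Frobenius. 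In the inert CM case, the Frobenius at $p$ swaps the two eigenlines of the CM action of $L\otimes K_{p}$ on the de~Rham space, because Frobenius acts on $L\otimes \mathbb{Q}_{p}=\mathbb{Q}_{p^{2}}$ by conjugation. The class of $g$ spans the holomorphic (Hodge) line $\mathrm{Fil}^{k-1}$. That line is one of the CM eigenlines, so its Frobenius image lies in the other line. Hence $V(g)\notin K_{p}g$, a contradiction.

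An alternative, more elementary route, which I would try first if the paper's setup permits, is via the $q$-expansion. Here $V(g)=g(q^{p})$ has a $q$-expansion supported on exponents divisible by $p$. A relation $V(g)=\lambda g$ in cohomology would mean $g(q^{p})-\lambda g(q)$ is a $p$-adic derivative $\theta^{k-1}h$ of some overconvergent form $h$. Comparing the coefficient of $q^{1}$ gives $-\lambda=a_{\theta^{k-1}h}(1)=a_{h}(1)$. Comparing the coefficients at $q^{pn}$ and using $a_{g}(p)=0$, one is led to an integrality or convergence obstruction for $h$, forcing $\lambda=0$. But $\lambda=0$ would make $V(g)$ exact, and that is impossible since $V$ is injective on cohomology.

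The main obstacle is this linear-independence step: it must be phrased in the precise cohomological language the paper adopts for $M_{\mathrm{dR}, p}(g)$, with its Frobenius semilinearity and Hodge filtration. Once that is done, the Corollary is immediate from the Theorem, since the remaining hypotheses (real Fourier coefficients, $p\nmid 6N$, $N\geq 5$) are identical.
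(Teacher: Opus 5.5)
Your reduction is the paper's. The Corollary is the main Theorem once $\{g,V(g)\}$ is known to be a basis of $M_{\mathrm{dR},p}(g)$. Your computation that $a_g(p)=0$, so that $\beta=-\beta'$ are distinct, is also where the paper starts.

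The difference is in how the basis property is obtained. The paper (Remark~\ref{remark cm basis}) uses Candelori--Castella's criterion: it suffices that $\beta\neq\beta'$ and $g_{\beta'}\notin\Im(\theta^{k-1})$. The second condition is Coleman's Lemma~6.3, because $v_p(\beta')=(k-1)/2<k-1$. You had this slope in hand but did not use it.

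Your ``elementary'' second route is this lemma in disguise, but as written it is not an argument. With $a_g(p)=0$, the odd-power coefficients of $h$ satisfy $v_p(a_h(p^{2i+1}))=-(i+1)(k-1)$ for every $\lambda$, so nothing ``forces $\lambda=0$''. What you would need is an integrality statement ruling out such an $h$ over $W_2^{\circ}$. Coleman supplies exactly this via the slopes of $U$: a $U$-eigen preimage of $g_{\beta'}$ under $\theta^{k-1}$ would have weight $2-k$ and negative slope $v_p(\beta')-(k-1)$.

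Your first route is a correct and genuinely different argument, the classical one for supersingular CM motives. It says that $\mathrm{Fil}^{k-1}$ is one eigenline of the CM action and that Frobenius exchanges the two eigenlines because $p$ is inert. However, every ingredient is imported from outside the paper's framework:
\begin{itemize}
\item an action of $L$ on $M_{\mathrm{dR}}(g)\otimes L$ by correspondences defined over $L$ and compatible with the Hodge filtration. The Hecke operators the paper uses act on $M_{\mathrm{dR}}(g)$ through scalars, so this action must be built separately, e.g.\ from the Hecke-character motive or by twisting by $\chi_L$ at higher level;
\item the identification of $V$, up to normalization, with crystalline Frobenius $\varphi$, which here is $K_p$-linear rather than semilinear;
\item the relation $\varphi\circ[\alpha]=[\bar\alpha]\circ\varphi$ for $\alpha\in L$.
\end{itemize}
What this route buys is a conceptual explanation tied to the CM structure.

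The paper's slope argument is much shorter and more general. It only uses $\beta\neq\beta'$ and $v_p(\beta')<k-1$, so it gives the basis property for any non-ordinary $g$ with distinct roots, CM or not. In $p$-adic Hodge-theoretic terms, a $\varphi$-stable line in $\mathrm{Fil}^{k-1}$ of slope less than $k-1$ would violate weak admissibility.
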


Next, we recall known results on $\delta_{g}$ and compare them with the main theorem. The first is due to K. Bringmann et~al.\@ \cite{bringmann2012mock}. Let $g$ be the unique normalized newform in $S_{4}(\Gamma_{0}(9))$. Then, $g$ has complex multiplication by $\Q(\sqrt{-3})$. K. Bringmann et~al.\@ \cite{bringmann2012mock} showed that $\delta_{g}$ is not zero for all inert primes $p$ such that $p < 32500$. After this, the author \cite{tajima2023p} showed that $\delta_{g}$ is a $p$-adic unit when $g \in S_{k}(\Gamma_{0}(N))$ has complex multiplication by an imaginary quadratic field $L$ in which $p$ is inert, and $\dim S_{k}(\Gamma_{0}(N))=1$ holds.  After this, P. Guerzhoy \cite{guerzhoy2025non} essentially showed that $\delta_{g}$ is not zero for almost all prime $p$ such that $p \nmid N$ and $a_{g}(p)=0$ when $g \in S_{2}(\Gamma_{0}(N), \mathbb{Q})$ (cf. \cite[Theorem 1]{guerzhoy2025non}). In the case that $g \in S_{2}(\Gamma_{0}(N), \mathbb{Q})$ has complex multiplication by $L$ and $p$ is inert in $L$, the author \cite{tajima2025p} determined the $p$-adic valuation of $\delta_{g}$ under mild assumption for $p$. Both papers \cite{guerzhoy2025non} and \cite{tajima2025p} use the formal group theory of elliptic curves. Therefore, it does not seem easy to prove the higher weight case using the same method. In this paper, we show that $\delta_{g}$ is not zero even in the higher weight case by using the cohomological interpretation of the differential of a good lift of $g$ developed in \cite{candelori2014harmonic} and \cite{candelori2017geometric}.

We briefly explain the proof of the main theorems. For simplicity, we only consider the case that $g$ has complex multiplication. Let $\mathcal{L}^{1}$ be the relative de Rham cohomology sheaf of the universal generalized elliptic curve $\mc{E}_{\mathrm{univ}}^{\mathrm{gen}} \rightarrow X=X_{1}(N)/K$. It is known that there is a connection $\nabla : \mathcal{L}^{1} \rightarrow \mathcal{L}^{1} \otimes \Omega_{X}^{1}(\log C)$, where $C$ is the cuspidal subscheme (see \cite[Section 2]{candelori2017geometric}). We denote the parabolic cohomology of $(\mathcal{L}^{k-2}, \nabla)$ by $\mathbb{H}^{1}_{\mathrm{par}}(X, \mathcal{L}^{k-2})$. L. Candelori \cite{candelori2014harmonic} showed that $\mathbb{H}^{1}_{\mathrm{par}}(X, \mathcal{L}^{k-2})$ can be expressed in terms of weakly holomorphic modular forms. We denote the space of all weakly holomorphic modular forms with $K$-coefficients by $M_{k}^{!}(\Gamma_{0}(N), K)$. We say that $h \in M_{k}^{!}(\Gamma_{0}(N), K)$ is a \emph{weakly holomorphic cusp form} with $K$-coefficients if the constant term vanishes at any cusp. We denote the space of all weakly holomorphic cusp form with $K$-coefficients by $S^{!}_{k}(\Gamma_{1}(N), K)$.  L. Candelori \cite{candelori2014harmonic} showed that the following isomorphism holds:

\begin{align*}
    \mathbb{H}^{1}_{\mathrm{par}}(X, \mathcal{L}^{k-2}) \simeq \dfrac{S^{!}_{k}(\Gamma_{1}(N), K)}{D^{k-1}M^{!}_{2-k}(\Gamma_{1}(N), K)},
\end{align*}
where $D\coloneqq qd/dq$. For a good lift $F^{+}$, it is known that $D^{k-1}(F^{+})$ is a weakly holomorphic cusp form. In addition, all coefficients of  $D^{k-1}(F^{+})$ are in $K$ because $g$ has complex multiplication. Hence, we denote the class in $\mathbb{H}^{1}_{\mathrm{par}}(X, \mathcal{L}^{k-2})$ corresponding to $D^{k-1}(F^{+})$ by $[D^{k-1}(F^{+})]$. We define the $g$-isotypical component of $\mathbb{H}^{1}_{\mathrm{par}}(X, \mathcal{L}^{k-2})$ via Hecke action and denote it by $M_{\mathrm{dR}}(g)$.  Since the parabolic cohomology is stable under base change to a field with characteristic $0$, $M_{\mathrm{dR}, p}(g)\coloneqq M_{\mathrm{dR}}(g) \otimes K_{p}$ naturally lies in $\mathbb{H}^{1}_{\mathrm{par}}(X, \mathcal{L}^{k-2}) \otimes K_{p}=\mathbb{H}^{1}_{\mathrm{par}}(X_{1}(N)_{K_{p}}, \mathcal{L}^{k-2})$. L. Candelori and F. Castella \cite{candelori2017geometric} defined an endomorphism $V$ on $M_{\mathrm{dR}, p}(g)$ whose action on $q$-expansions is $q \mapsto q^{p}$. Since we assume that $\{g, V(g) \}$ is a basis of $M_{\mathrm{dR}, p}(g)$, there exist unique $p$-adic numbers $A, B \in K_{p}$ such that the following equality holds:

\begin{align*}
[D^{k-1}(F^{+})]=Ag+BV(g).
\end{align*}

L. Candelori et~al.\@ \cite{candelori2017geometric} showed that $\gamma_g=A$ and $\delta_{g}=-B$ hold. Hence, it is enough to show that $\{[D^{k-1}(F^{+})], g\}$ is a basis of  $M_{\mathrm{dR}, p}(g)$. Since both $D^{k-1}(F^{+})$ and $g$ are defined over $K$, it is sufficient to show that $\{[D^{k-1}(F^{+})], g\}$ is a basis of $M_{\mathrm{dR}}(g)$. We define a pairing $ \langle \cdot, \cdot\rangle$ on  $\mathbb{H}^{1}_{\mathrm{par}}(X, \mathcal{L}^{k-2})$ referring to the pairing defined by J. H. Bruinier and J. Funke \cite{bruinier2004two}. For $[\phi], [\psi] \in  \mathbb{H}^{1}_{\mathrm{par}}(X, \mathcal{L}^{k-2})$, we define
 \begin{align*}
 \langle [\phi], [\psi] \rangle \coloneqq \sum _{s \colon \text{cusp}}\sum _{n \in \mathbb{Z}\backslash\{0\}}\dfrac{a_{\phi }^{(s)}(n)a_{\psi }^{(s)}(-n)}{n^{k-1}}.
 \end{align*}
 By the Stokes' Theorem, we show that $\langle [D^{k-1}(F^{+})], g \rangle=(\xi_{2-k}(F), g)$ where $(\cdot, \cdot)$ is the Petersson inner product (see \cite{bruinier2004two}). By the definition of a good lift, we have that $\xi_{2-k}(F)=g/\left\| g\right\| ^{2}$. Therefore, $\langle [D^{k-1}(F^{+})], g \rangle=1$ holds. This implies that $[D^{k-1}(F^{+})]$ and $g$ are linearly independent because $\langle g, g \rangle=0$ holds. Hence, we conclude that $\delta_{g}\not=0$.

\begin{remark}
If not all the Fourier coefficients of $g$ are real, then $D^{k-1}(F^{+})$ is not in $M_{\text{dR}}(g)$. By contrast, $D^{k-1}((F^{c})^{+})$ lies in $M_{\text{dR}}(g)$ where $F^{c}$ is good for $g^{c}=\sum \overline{a_{g}(n)}q^{n} \in S_{k}(\Gamma_{0}(N), \overline{\chi})$. Therefore, we need to consider $F^{c}$ instead of $F$. However, $\langle [D^{k-1}((F^{c})^{+})], g \rangle=(\xi_{2-k}(F^{c}), g)=(g^{c}/\left\| g\right\| ^{2}, g)=0$ holds in general. This means that our method is not valid in the case that $g\not=g^{c}$ .
\end{remark}
Throughout this paper, we fix embeddings $\overline{\mathbb{Q}} \hookrightarrow \mathbb{C}$ and $\overline{\mathbb{Q}} \hookrightarrow \mathbb{C}_{p}$ and a valuation $v_{p}$ such that $v_{p}(p)=1$ and assume that $N \geq 5$ and $p \geq 5$.

This paper is based on the author’s doctoral thesis.

\section*{Acknowledgements}
The author would like to express his sincere gratitude to Prof.\ Shinichi Kobayashi for his helpful and valuable comments on the author's research. Discussions with him greatly improved the author's research. The author is grateful to Prof.\ Francesc Castella for helpful discussions on his paper \cite{candelori2017geometric} and for answering the author's questions about it. The author is also grateful to Prof.\ Ming-Lun Hsieh for providing the author with opportunities to discuss with Prof. Francesc Castella at National Taiwan University. The author would like to thank Prof.\ Bryden Cais. Thanks to discussions with him, the author gained a deeper understanding of the geometry of modular curves. The author would like to thank Prof.\ Kentaro Nakamura for giving the author valuable comments on the author's research. The author would also like to thank Prof.\ Takashi Hara for his strong interest in the author's research and for asking insightful mathematical questions. The author would like to thank Prof.\ Toshiki Matsusaka for providing valuable comments on analytic aspects of mock modular forms. The author was supported by JSPS KAKENHI Grant Number JP23KJ1720 and WISE program (MEXT) at Kyushu University.

\section{Analytic aspects of mock modular forms}
In this section, we review the analytic aspects of mock modular forms.
\subsection{Harmonic Maass forms and mock modular forms}
In this subsection, we recall known results on harmonic Maass forms and mock modular forms. 

We denote the upper-half plane by $\mathbb{H}$. For a $q$-series $f$, we denote the $n$-th coefficient of $f$ by $a_{f}(n)$. 
\begin{dfn}[{\cite[Definition 1]{candelori2014harmonic}}]\label{def of harmonic maass}
Let $k \in \mathbb{Z}$ be an even number. If a $C^{\infty}$ function $F$ satisfies the following conditions, we say that $F$ is a \emph{harmonic Maass form of weight $k$ on $\Gamma_{0}(N)$ with character $\chi$}:
\begin{itemize}
 \setlength{\leftskip}{0.5cm}

    \item[(1)] $F\left(\dfrac{a\tau+b}{c\tau+d}\right)=\chi(d)(c\tau+d)^{k}F(\tau)$ for all $\gamma =  \bigl(
\begin{smallmatrix}
   a & b \\
   c & d
\end{smallmatrix}
\bigr) \in \Gamma_{0}(N)$.
    \item[(2)] $\Delta_{k}F=0$, where
 $\Delta_{k}=-y^{2}\left( \dfrac{\partial ^{2}}{\partial x^{2}}+\dfrac{\partial ^{2}}{\partial y^{2}}\right) +iky\left( \dfrac{\partial }{\partial x}+i\dfrac{\partial }{\partial y}\right) $.
    \item[(3)] For any $\gamma =  \bigl(
\begin{smallmatrix}
   a & b \\
   c & d
\end{smallmatrix}
\bigr) \in SL_{2}(\mathbb{Z})$, there exist a natural number $h \in \mathbb{N}$, a polynomial $P_{\gamma} \in \mathbb{C}[T]$, and a positive number $\varepsilon  >0$ such that
\begin{align*}
\chi(d)^{-1}(c\tau+d)^{-k}F\left(\dfrac{a\tau+b}{c\tau+d}\right)-P_{\gamma}(\exp(-2\pi i \tau/h))=\text{O} (e^{-\varepsilon y})  \text{ as } y  \rightarrow \infty
\end{align*}

\end{itemize}
\end{dfn}
We denote a vector space of all harmonic Maass forms of weight $k$ on $\Gamma_{0}(N)$ with $\chi$ by $H_{k}(\Gamma_{0}(N), \chi)$ and we set $H_{k}(\Gamma_{1}(N)):=\oplus H_{k}(\Gamma_{0}(N), \chi)$. A harmonic Maass form has a Fourier expansion. It is known that a Fourier expansion of $F \in H_{k}(\Gamma_{1}(N))$ is written by
\begin{align}\label{q-expansion of harmonic maass}
F(\tau)=\sum _{n\gg -\infty}a_{F^{+}}( n) q^{n}+\sum _{n <0}a_{F^{-}}( n) \Gamma (1-k,4\pi  \left| n\right| y) q^{n},
\end{align} 
where $\Gamma(s, z):=\int ^{\infty }_{z}e^{-t}t^{s}\dfrac{dt}{t}$ is the incomplete Gamma-function (cf.\ \cite[Lemma 4.3]{bringmann2017harmonic}). We define the \emph{holomorphic part} $F^{+}(\tau)$ and the \emph{non-holomorphic part} $F^{-}(\tau)$ by
\begin{align*}
&F^{+}(\tau):=\sum _{n\gg -\infty}a_{F^{+}}( n) q^{n},
\\&
F^{-}(\tau):=\sum _{n <0}a_{F^{-}}( n) \Gamma ( k-1,4\pi  \left| n\right| y) q^{n}.
\end{align*}
Especially, the partial sum $\sum_{n \leq 0}a_{F^{+}}(n)q^{n}$ is called the \emph{principal part of $F$ at $\infty$}.

We denote the vector space of all weakly holomorphic modular forms of weight $k$ on $\Gamma_{1}(N)$ by $M^{!}_{k}(\Gamma_{1}(N))$. In other words, an element $h \in M^{!}_{k}(\Gamma_{1}(N))$ is a meromorphic modular form whose poles lie in only cusps. In particular, we say that $h$ is a \emph{weakly holomorphic cusp form} if $h \in M^{!}_{k}(\Gamma_{1}(N))$ and the constant term of its $q$-expansion is equal to $0$ at any cusp. We denote the space of weakly holomorphic cusp forms by $S^{!}_{k}(\Gamma_{1}(N))$. We can regard $h \in M^{!}_{k}(\Gamma_{1}(N))$ as an element in $H_{k}(\Gamma_{1}(N))$ whose non-holomorphic part $h^{-}$ is equal to $0$. 

Next, we introduce two important differential operators. There are two operators $D:=\frac{1}{2 \pi i}\frac{d}{d\tau}$ and $\xi_{2-k}:=2iy^{2-k}\overline{( \frac{\partial }{\partial \overline{\tau}})}$ on $H_{2-k}(\Gamma_{1}(N))$. These operators give the following maps:
\begin{align*}
&D^{k-1} : H_{2-k}(\Gamma_{1}(N)) \rightarrow S^{!}_{k}(\Gamma_{1}(N)),
\\&
\xi_{2-k} : H_{2-k}(\Gamma_{1}(N)) \rightarrow S_{k}(\Gamma_{1}(N)).
\end{align*}
Especially, the map $\xi_{2-k}$ is an antilinear morphism. In other words, $\xi_{2-k}(cF)=\overline{c}\xi_{2-k}(F)$ holds for all $c \in \mathbb{C}$. Therefore, $\xi_{2-k}$ maps from $H_{2-k}(\Gamma_{0}(N), \overline{\chi})$ to $S_{k}(\Gamma_{0}(N), \chi)$. We note that $D^{k-1}(F)=D^{k-1}(F^{+})$ and $\xi_{2-k}(F)=\xi_{2-k}(F^{-})$ hold. In other words, $D^{k-1}$ (resp. $\xi_{2-k}$) extracts information only from the holomorphic part $F^{+}$ (resp. the non-holomorphic part $F^{-}$). We say that $g \in S_{k}(\Gamma_{1}(N))$ is a normalized newform if $g\in S_{k}(\Gamma_{1}(N))^{\text{new}}$ and $g$ is a Hecke eigenform, and $a_{g}(1)=1$. We associate harmonic Maass forms with a normalized newform.

 \begin{dfn}[{\cite[Definition 1.1]{candelori2017geometric}}]\label{good on Gamma1}
 Let $g \in S_{k}(\Gamma _{1}(N))$ be a normalized newform and $L_{g}$ be the Hecke field of $g$.
We say that $F \in H_{2-k}(\Gamma_{1}(N))$ is \emph{good for $g$} if $F$ satisfies the following conditions\textrm{:}

\begin{itemize}
  \setlength{\leftskip}{0.5cm}

 \item[(1)]The principal part of $F$ at any cusp of $\Gamma_{1}(N)$ belongs to $L_{g}[q^{-1}]$.

 \item[(2)]We have that $\xi _{2-k}(F) =\dfrac{g}{\left\| g\right\| ^{2}}$.
 \end{itemize}
 \end{dfn}
 We say that a mock modular form $F^{+}$ is a \emph{good lift of $g$} if $F$ is good for $g$.

\begin{thm}[{\cite[Proposition 5.1]{bruinier2008differential}, \cite[Section 2]{candelori2017geometric}}]
Let $g \in S_{k}(\Gamma_{1}(N))$ be a normalized newform. There exists a harmonic Maass form $F \in H_{2-k}(\Gamma_{1}(N))$ such that $F$ is good for $g$.
\end{thm}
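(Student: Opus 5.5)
The plan is to construct $F$ from a prescribed principal part with coefficients in $L_{g}$, and to use the Bruinier--Funke pairing to check that $\xi_{2-k}(F)$ is exactly $g/\left\| g\right\|^{2}$.

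\textbf{Step 1 (analytic input).} I would use two results of Bruinier--Funke \cite{bruinier2004two}, which I take as known.
\begin{itemize}
\item[(a)] For every Laurent polynomial $P\in\mathbb{C}[q^{-1}]$ (a principal part at $\infty$, with zero principal part at the other cusps) there is $F_{P}\in H_{2-k}(\Gamma_{1}(N))$ whose principal parts are $P$ at $\infty$ and zero elsewhere, up to constant terms. One can build it from Maass--Poincar\'e series.
\item[(b)] For every $h\in S_{k}(\Gamma_{1}(N))$ the pairing identity holds:
\[
(h,\xi_{2-k}(F))=\sum_{s}\sum_{n\le 0}a^{(s)}_{F^{+}}(n)\,a^{(s)}_{h}(-n).
\]
\end{itemize}
Applied to $F_{P}$, identity (b) reduces to $(h,\xi_{2-k}(F_{P}))=\sum_{n<0}a_{P}(n)a_{h}(-n)$. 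Thus $\xi_{2-k}(F_{P})$ is the unique cusp form representing the linear functional $\lambda_{P}\colon h\mapsto\sum_{n<0}a_{P}(n)a_{h}(-n)$ under the Petersson product. (This product is linear in its first slot.)

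\textbf{Step 2 (reduction to linear algebra).} It suffices to find $P\in L_{g}[q^{-1}]$ with $\lambda_{P}(h)=(h,g)/\left\| g\right\|^{2}$ for all $h$. Call the right-hand side $\lambda_{g}(h)$. Write $S_{k}(\Gamma_{1}(N))=\mathbb{C}g\oplus W$, where $W$ is the sum of all the other Hecke eigen-systems, old and new. By strong multiplicity one, the eigen-system of $g$ occurs only on $\mathbb{C}g$. Since the Hecke operators $T_{n}$ with $(n,N)=1$ are normal, $W\perp g$. Hence $\lambda_{g}(h)$ is simply the coefficient $c$ in $h=cg+w$ with $w\in W$.

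\textbf{Step 3 (rationality of $\lambda_{g}$; the main point).} The projection $h\mapsto cg$ is a polynomial in finitely many Hecke operators with coefficients in $L_{g}$. This is Lagrange interpolation separating the eigenvalue system of $g$ from the finitely many other systems, done Galois-equivariantly. Since $S_{k}(\Gamma_{1}(N))$ has a basis with rational $q$-expansions at $\infty$ and the Hecke action preserves $L_{g}$-rationality, $\lambda_{g}$ maps $S_{k}(\Gamma_{1}(N),L_{g})$ into $L_{g}$. So $\lambda_{g}$ is an $L_{g}$-rational functional.

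\textbf{Step 4 (choosing $P$).} The functionals $h\mapsto a_{h}(m)$ for $1\le m\le M$, with $M$ beyond the Sturm bound, separate $S_{k}(\Gamma_{1}(N),L_{g})$. Therefore they span its $L_{g}$-dual, and we can write $\lambda_{g}=\sum_{m}c_{m}a_{\bullet}(m)$ with $c_{m}\in L_{g}$. Take $P=\sum_{m}c_{m}q^{-m}$ and $F=F_{P}$. Then $\xi_{2-k}(F)=g/\left\| g\right\|^{2}$, and all principal parts (namely $P$ at $\infty$ and $0$ at the other cusps) lie in $L_{g}[q^{-1}]$. So $F$ is good for $g$.

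The main obstacle is the analytic input of Step 1, especially existence with arbitrarily prescribed principal parts. When $k=2$ there is an extra subtlety: the constant term and the Eisenstein obstruction, since weight-$0$ harmonic forms have constant terms. I would handle it by working modulo constants, which are killed by $\xi_{2-k}$ and contribute nothing to the pairing against cusp forms. Everything after Step 1 is Hecke-theoretic linear algebra.
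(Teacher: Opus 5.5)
Your proposal is correct and is essentially the argument behind \cite[Proposition 5.1]{bruinier2008differential}, which the paper cites rather than proves: you combine Bruinier--Funke duality and the existence of harmonic Maass forms with prescribed principal part with the $L_{g}$-rationality of the Petersson complement of $g$ (your functional $\lambda_{g}$), and then solve for $P$ in $L_{g}[q^{-1}]$. One loose end remains: $F_{P}$ is pinned down only up to constant terms, while the paper's principal parts include the $q^{0}$ coefficient, so your closing claim that all principal parts lie in $L_{g}[q^{-1}]$ tacitly assumes these constants are $L_{g}$-rational; they play no role in the pairing with cusp forms (nor in $D^{k-1}(F)$), but for $k\geq 4$ they are forced, being determined by pairing $F_{P}$ against Eisenstein series, and their rationality needs a separate remark.
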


For a number field $K$, we denote the space of cusp forms whose Fourier coefficients at $\infty$ are in $K$ with character $\chi$ by $S_{k}(\Gamma_{0}(N), \chi, K)$. The spaces $M^{!}_{k}(\Gamma_{1}(N), K)$ and $S^{!}_{k}(\Gamma_{1}(N), K)$ are also defined in the same manner.

A remarkable property of a good lift $F^{+}$ is that we can modify $F^{+}$ to a $q$-series with algebraic coefficients. For a cusp form $h \in S_{k}(\Gamma_{0}(N), \chi, K)$, we define the \emph{Eichler integral of $h$} by
\begin{align*}
E_{h}(q)\coloneqq \sum_{n \geq 1}n^{1-k}a_{h}(n)q^{n}.
\end{align*}
For any complex number $\gamma \in \mathbb{C}$, we define a formal power series $\mathcal{F}_{\gamma}$ by
\begin{align*}
\mathcal{F}_{\gamma}\coloneqq F^{+}(q)-\gamma E_{g}(q).
\end{align*}

\begin{thm}[{\cite[Theorem 1.1]{guerzhoy2010p}}]\label{thm. alg of coef of mock generalver}
Let $g \in S_{k}(\Gamma_{0}(N), \chi, K)$ be a normalized newform and $F \in H_{2-k}(\Gamma_{1}(N))$ be good for $g$. Then, the $q$-series $\mathcal{F}_{a_{F^{+}}(1)}$ lies in $K(\!(q)\!)$.

\end{thm}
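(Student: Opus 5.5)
The statement is Theorem~\ref{thm. alg of coef of mock generalver}: for a good lift $F^{+}$, the modified series $\mathcal{F}_{a_{F^{+}}(1)}$ has all its coefficients in $K$. I will use three inputs: the Hecke equivariance of $\xi_{2-k}$ and $D^{k-1}$, multiplicity one for the newform $g$, and the rationality of the principal parts from Definition~\ref{good on Gamma1}(1).

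\textbf{Reduction to $D^{k-1}$.} Differentiating $k-1$ times gives
\[
D^{k-1}(\mathcal{F}_{\gamma})=D^{k-1}(F^{+})-\gamma g, \qquad \gamma=a_{F^{+}}(1).
\]
Since $D^{k-1}$ multiplies the $n$-th coefficient by $n^{k-1}$, it is enough to show that $D^{k-1}(F)-a_{F^{+}}(1)\,g$ has coefficients in $K$ for all $n\neq 0$. The term $n=0$ is the constant term of $F^{+}$, which lies in $K$ directly by condition (1).

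\textbf{Hecke operators and algebraicity.} Put $h:=D^{k-1}(F)\in S^{!}_{k}(\Gamma_{1}(N))$. For a prime $\ell\nmid N$, Bol's identity gives $T_\ell h=D^{k-1}(T_\ell^{(2-k)}F)$. Since $\xi_{2-k}$ intertwines the Hecke operators (up to conjugating the eigenvalues), we have
\[
\xi_{2-k}\bigl(T_\ell F-\overline{a_g(\ell)}\,\ell^{1-k}\cdots F\bigr)=0
\]
with the appropriate normalization. Hence $(T_\ell-\lambda_\ell)F$ is weakly holomorphic, where $\lambda_\ell$ is the suitably normalized eigenvalue. Its principal parts are $K$-rational, because the principal parts of $F$ are and $\lambda_\ell\in K$. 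The space $M^{!}_{2-k}$ has a basis with $K$-rational $q$-expansions. Moreover, a weakly holomorphic form of weight $2-k<0$ is determined by its principal parts up to cusp forms, and there are no cusp forms of negative weight. So $(T_\ell-\lambda_\ell)F\in M^{!}_{2-k}(\Gamma_1(N),K)$.

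\textbf{Solving the recursion.} Comparing coefficients turns this into a recursion for $a_{F^{+}}(n)$ whose inhomogeneous term has $K$-coefficients. By multiplicativity, the unknown ambiguity is governed by a single free parameter, the coefficient $a_{F^{+}}(1)$ attached to $g$. Concretely, I would choose a polynomial $P$ in the Hecke operators, with $K$-coefficients, that kills $g$ but acts invertibly on the rest of $S_k$; this uses multiplicity one and newness. Then $P(F)$ is a weakly holomorphic form with $K$-rational principal parts, so it has $K$-coefficients. It follows that $F^{+}$ agrees with a $K$-series up to an element of the $g$-eigenspace in the space of formal Eichler integrals.

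\textbf{Main obstacle.} The hardest step is to show that this $g$-ambiguity is exactly $c\,E_g$ with $c=a_{F^{+}}(1)$. This includes controlling the coefficients at $n$ divisible by primes dividing $N$, where $T_\ell$ for $\ell\mid N$ must be handled through $U_\ell$ and newform theory. The claim $c=a_{F^{+}}(1)$ itself is then read off from the $q^{1}$ coefficient.
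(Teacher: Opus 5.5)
The paper does not prove this statement; it quotes it from Guerzhoy--Kent--Ono. Your attempt therefore has to stand on its own, and it has a genuine gap at the one step that carries the content.

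Your reduction to $D^{k-1}(F^{+})$ is fine, and so is your second step. $(T_\ell-\ell^{1-k}\overline{a_g(\ell)})F$ is weakly holomorphic of weight $2-k\le 0$, and its principal parts involve only those of $F$. A weakly holomorphic form of weight $2-k\le 0$ is determined by its principal parts (constant terms included), so descent for an injective $K$-linear map puts it in $M^{!}_{2-k}(\Gamma_1(N),K)$.

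The gap is the sentence ``It follows that $F^{+}$ agrees with a $K$-series up to an element of the $g$-eigenspace in the space of formal Eichler integrals''; it does not follow. On formal $q$-series a Hecke polynomial has an enormous kernel: for $T_\ell-\lambda$, the coefficients at all $n$ prime to $\ell$ are unconstrained. So $P(F^{+})\in K(\!(q)\!)$ says nothing about the individual coefficients of $F^{+}$. Invertibility of $P$ on the rest of $S_k$ is irrelevant, because neither $F^{+}$ nor $D^{k-1}(F^{+})$ lies in $S_k$. You also never produce a $K$-rational preimage of $P(F)$. This is exactly your ``main obstacle'', and it is left open.

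Note also that $\xi_{2-k}$ is antilinear, so the eigen-system you can kill is $\overline{a_g(\ell)}$. The leftover line is therefore spanned by $g^{c}=\sum\overline{a_g(n)}q^{n}$, not by $g$, consistent with the Remark in the introduction. For real $g$, which is the case the paper uses, $g^{c}=g$. If $g\neq g^{c}$, pick $\ell\nmid N$ with $a_g(\ell)\notin\R$ and compare $q^{1}$-coefficients in $(T_\ell-\ell^{1-k}\overline{a_g(\ell)})F\in M^{!}_{2-k}(K)$. This shows that $F^{+}-a_{F^{+}}(1)E_g\in K(\!(q)\!)$ would force $a_{F^{+}}(1)\bigl(a_g(\ell)-\overline{a_g(\ell)}\bigr)\in K$. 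So the $E_g$-version is then equivalent to all coefficients of $F^{+}$ lying in $K$. In that case your obstacle cannot be overcome as formulated; what such an argument yields is $F^{+}-a_{F^{+}}(1)E_{g^{c}}\in K(\!(q)\!)$.

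The missing idea is to work not with formal series but in the finite-dimensional Hecke module $\mathcal{H}_K=S^{!}_k(\Gamma_1(N),K)/D^{k-1}M^{!}_{2-k}(\Gamma_1(N),K)\cong\mathbb{H}^{1}_{\mathrm{par}}(X,\mathcal{L}^{k-2})$, whose base change to $\mathbb{C}$ is the same quotient over $\mathbb{C}$.

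\begin{itemize}
\item Apply your second step to every $\ell\nmid N$ and then $D^{k-1}$ (Bol's identity). This shows $[D^{k-1}(F^{+})]$ lies in the joint eigenspace $E_{\mathbb{C}}$ for the eigenvalues $\overline{a_g(\ell)}\in K$.
\item Since these eigenvalues lie in $K$ and $\mathcal{H}$ is finite-dimensional, $E_{\mathbb{C}}=E_K\otimes\mathbb{C}$. By strong multiplicity one and injectivity of $S_k\to\mathcal{H}$, $E_{\mathbb{C}}$ meets the image of $S_k$ only in $\mathbb{C}g^{c}$.
\item Take a $K$-basis $g^{c},\psi_1,\dots,\psi_r$ of $E_K$ and write $D^{k-1}(F^{+})=c_0g^{c}+\sum c_i\psi_i+D^{k-1}(m)$ with $c_i\in\mathbb{C}$ and $m\in M^{!}_{2-k}(\mathbb{C})$.
\item Send $(c_1,\dots,c_r,D^{k-1}(m))$ to the polar parts at all cusps of $\sum c_i\psi_i+D^{k-1}(m)$. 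This map is defined over $K$ and injective. A kernel element makes that form a cusp form, so all $c_i=0$. Then $m$ is holomorphic of weight $2-k\le 0$, so $D^{k-1}(m)=0$.
\item The polar parts of $D^{k-1}(F^{+})$ are $K$-rational by condition (1). Hence $c_1,\dots,c_r\in K$ and $D^{k-1}(m)$ is $K$-rational.
\item Comparing $q^{1}$-coefficients gives $c_0-a_{F^{+}}(1)\in K$. Thus $D^{k-1}(F^{+})-a_{F^{+}}(1)g^{c}\in S^{!}_k(\Gamma_1(N),K)$.
\end{itemize}

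Dividing by $n^{k-1}$, together with your treatment of the constant term, finishes the argument. No coefficient recursion and no $U_\ell$ for $\ell\mid N$ are needed.
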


Let $L$ be an imaginary quadratic field, and $\chi_{L}$ be the Dirichlet character associated to $L/\mathbb{Q}$.
We say that a normalized newform $g \in S_{k}(\Gamma_{1}(N))$ has complex multiplication by an imaginary quadratic field $L$ if $a_{g}(l)=a_{g}(l)\chi_{L}(l)$ holds for prime numbers $l$ in a set of primes of density $1$. If $g$ has a complex multiplication, then all the coefficients of its good lift $F^{+}$ are algebraic without any modification.

\begin{thm}[{\cite[Corollary 1.2]{ehlen2024harmonic}}]\label{thm. coef alg}
Let $g \in S_{k}(\Gamma_{0}(N), \chi, K)$ be a normalized newform with complex multiplication by an imaginary quadratic field $L$. If $F^{+}$ is a good lift of $g$, then all the coefficients of $F^{+}$ are in $K$.
\end{thm}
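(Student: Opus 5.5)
The plan is to reduce the claim to the algebraicity of the single coefficient $a_{F^{+}}(1)$, and then to use the CM condition to force that coefficient into $K$ via Hecke operators at inert primes.

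\textbf{Step 1 (reduction).} By Theorem \ref{thm. alg of coef of mock generalver}, $\mathcal{F}_{a_{F^{+}}(1)}=F^{+}-a_{F^{+}}(1)E_{g}$ has coefficients in $K$. Hence the coefficient of $F^{+}$ at $n$ equals an element of $K$ plus $a_{F^{+}}(1)\,n^{1-k}a_{g}(n)$. It therefore suffices to show $a_{F^{+}}(1)\in K$. The CM hypothesis gives $a_{g}(n)=0$ whenever $n$ is divisible to an odd power by a prime inert in $L$. So for such $n$ the coefficient of $F^{+}$ already lies in $K$, and the whole difficulty is concentrated in $a_{F^{+}}(1)$.

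\textbf{Step 2 (Hecke operators at inert primes).} Fix a prime $l\nmid N$ inert in $L$, so that $a_{g}(l)=0$ and $a_{g}(l^{2})=-\chi(l)l^{k-1}$. The weight $2-k$ Hecke operator $T_{l}$ commutes with $\xi_{2-k}$ up to a nonzero constant factor. Hence $\xi_{2-k}(T_{l}F)$ is a multiple of $a_{g}(l)\,g$, which is $0$. Thus $T_{l}F$, and likewise $T_{l}^{2}F$, is weakly holomorphic. Its principal parts are $K$-linear combinations of those of $F$, so they lie in $K[q^{-1}]$. By the $q$-expansion principle, together with the Galois action on $M^{!}_{2-k}(\Gamma_{1}(N))$ and the fact that a weakly holomorphic form with vanishing principal parts at all cusps lies in a space with a $K$-rational basis, one gets $T_{l}^{2}F\in M^{!}_{2-k}(\Gamma_{1}(N),K)$, at least after subtracting a holomorphic form with $K$-coefficients.

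\textbf{Step 3 (extracting $a_{F^{+}}(1)$).} I would compute the coefficient of $q^{1}$ in $T_{l^{2}}F$, or in a suitable combination of $T_{l}^{2}$ and the identity. This coefficient is an explicit combination $a_{F^{+}}(l^{2})+c_{l}\,a_{F^{+}}(1)$ with $c_{l}$ a power of $l$ times $\chi(l)$. By Step 1, $a_{F^{+}}(l^{2})\in K+a_{F^{+}}(1)\,l^{2(1-k)}a_{g}(l^{2})$. The total coefficient of $a_{F^{+}}(1)$ is then an explicit algebraic number $\lambda_{l}$, and we would conclude $\lambda_{l}a_{F^{+}}(1)\in K$.

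\textbf{Main obstacle.} The hardest step is showing $\lambda_{l}\neq 0$. There is a real danger of cancellation, since $E_{g}$ behaves like an eigenvector of the Hecke operators and the $E_{g}$-contribution may be annihilated exactly. If that happens, I would replace $T_{l}^{2}$ by the twisting operator: form $F\otimes\chi_{L}$, observe that $\xi_{2-k}(F-F\otimes\chi_{L})$ vanishes because $g\otimes\chi_{L}=g$ away from the discriminant, and compare coefficients at $n$ with $\chi_{L}(n)=1$ instead. The aim in either case is to produce a weakly holomorphic form with $K$-rational principal parts whose first coefficient is a nonzero multiple of $a_{F^{+}}(1)$.
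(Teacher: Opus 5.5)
The paper gives no proof of this statement; it is quoted from \cite{ehlen2024harmonic}. Your Step 1 is fine, but Steps 2--3 cannot work, because $\lambda_l$ is identically zero. Since $F$ has character $\overline{\chi}$, the $q^{1}$-coefficient of $T_l^2F$ is $a_{F^{+}}(l^{2})+\overline{\chi}(l)\,l^{1-k}a_{F^{+}}(1)$. Step 1 together with $a_g(l^2)=-\chi(l)l^{k-1}$ gives $a_{F^{+}}(l^{2})\in K-\chi(l)\,l^{1-k}a_{F^{+}}(1)$. Hence $\lambda_l=(\overline{\chi}(l)-\chi(l))\,l^{1-k}$, which is $0$ whenever $\chi$ is real (e.g.\ trivial).

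The reason is structural. For real $\chi$ one has $T_lE_g=l^{1-k}a_g(l)E_g=0$ as a $q$-series, so $T_lF^{+}=T_l\mathcal{F}_{a_{F^{+}}(1)}$; this also gives your Step 2 rationality claim for free. No other combination helps either: $T_{l^2}$ is of the form $T_l^2-\mu$ with $\mu\neq0$, and $\xi_{2-k}((T_l^2-\mu)F)=-\overline{\mu}\,g/\|g\|^2\neq0$. For non-real $\chi$, a nonzero $\lambda_l$ would only be an artifact of pairing with $E_g$. By the Corollary following Theorem \ref{thm. hecke action on harmonic}, $D^{k-1}(F^{+})$ has Hecke eigenvalues $\overline{a_g(l)}$, so the transcendental part of $F^{+}$ is $a_{F^{+}}(1)E_{g^{c}}$, and $T_lE_{g^{c}}=0$ exactly. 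Hecke operators never see $a_{F^{+}}(1)$, which is exactly why Theorem \ref{thm. alg of coef of mock generalver} leaves it undetermined.

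Your fallback is the right idea, but as written it is vacuous because of a sign. Twisting multiplies the $n$-th Fourier term by $\chi_L(n)$ for every $n\in\Z$, including the negative indices carrying $F^{-}$, and $\chi_L(-1)=-1$. Hence $\xi_{2-k}(F\otimes\chi_L)=-\xi_{2-k}(F)\otimes\chi_L$, so $\xi_{2-k}(F-F\otimes\chi_L)=(g+g\otimes\chi_L)/\|g\|^2\neq0$. In any case, the coefficients of $F-F\otimes\chi_L$ at $n$ with $\chi_L(n)=1$ are $(1-\chi_L(n))a_{F^{+}}(n)=0$, so comparing them yields nothing. The correct object is $H=F+F\otimes\chi_L$, minus suitable $K$-multiples of $F(m\tau)$ for $m\mid D_L$, chosen to cancel the $\xi_{2-k}$-image on the terms with $(n,D_L)>1$. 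Its $q^{1}$-coefficient is $2a_{F^{+}}(1)$.

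You must also land in $K$, not just in a field containing the Gauss sums and roots of unity that appear at the other cusps of the larger level; your Step 2 does not address this. A clean way avoids principal parts altogether. Write $H=H_0+a_{F^{+}}(1)E'$ with $H_0\in K(\!(q)\!)$, where $E'$ has $K$-rational coefficients and $D^{k-1}E'$ is a nonzero cusp form. For $\sigma\in\mathrm{Aut}(\mathbb{C}/K)$, $\sigma(H)$ is again weakly holomorphic, so $(\sigma(a_{F^{+}}(1))-a_{F^{+}}(1))E'$ is weakly holomorphic. This forces $\sigma(a_{F^{+}}(1))=a_{F^{+}}(1)$, because a nonzero cusp form does not lie in $D^{k-1}M^{!}_{2-k}$.
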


We consider the Hecke action on harmonic Maass forms.
\begin{thm}\label{thm. hecke action on harmonic}
Let $g \in S_{k}(\Gamma_{1}(N))$ be a normalized newform and $F \in H_{2-k}(\Gamma_{1}(N))$ be good for $g$. There exists a weakly holomorphic modular form $h_{l} \in M_{k}^{!}(\Gamma_{1}(N))$ such that the following equality holds for all prime numbers $l$:
\begin{align*}
T_{l}F=l^{1-k}\overline{a_{g}(l)}F+h_{l}
\end{align*}
\end{thm}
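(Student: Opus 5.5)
We will prove the statement by checking the two properties that characterize $T_{l}F - l^{1-k}\overline{a_{g}(l)}F$: it must be a harmonic Maass form, and it must lie in the kernel of $\xi_{2-k}$. Then $h_{l}\coloneqq T_{l}F-l^{1-k}\overline{a_{g}(l)}F$ is the desired form, and the equality holds by construction. Here $T_{l}$ is the weight $2-k$ Hecke operator on $H_{2-k}(\Gamma_{1}(N))$, normalized in the standard way, and for $l\mid N$ it is the $U_{l}$-operator.

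The first step is to recall the standard fact that $T_{l}$ preserves $H_{2-k}(\Gamma_{1}(N))$. It is a finite sum of slash-translates $F|_{2-k}\alpha$ over double coset representatives (together with diamond operators), and such sums preserve the transformation law, harmonicity $\Delta_{2-k}F=0$, and the growth condition (3) at every cusp.

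The key step is an intertwining relation between $\xi_{2-k}$ and the Hecke operators. Using $\xi_{2-k}(F|_{2-k}\alpha)=(\xi_{2-k}F)|_{k}\alpha$ up to a factor $\det(\alpha)^{\,k-1}$, and the antilinearity of $\xi_{2-k}$, we obtain
\begin{align*}
\xi_{2-k}(T_{l}F)=l^{1-k}\,T_{l}\bigl(\xi_{2-k}F\bigr),
\end{align*}
where on the right $T_{l}$ is the weight $k$ Hecke operator (with character interchanged to match the antilinearity). This power of $l$ is exactly what produces the factor $l^{1-k}$ in the statement. Since $F$ is good for $g$, we have $\xi_{2-k}(F)=g/\left\|g\right\|^{2}$, and $T_{l}g=a_{g}(l)g$ because $g$ is a normalized newform. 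Hence
\begin{align*}
\xi_{2-k}(T_{l}F)=l^{1-k}a_{g}(l)\,\frac{g}{\left\|g\right\|^{2}}.
\end{align*}
On the other hand, antilinearity gives
\begin{align*}
\xi_{2-k}\bigl(l^{1-k}\overline{a_{g}(l)}F\bigr)=l^{1-k}a_{g}(l)\,\frac{g}{\left\|g\right\|^{2}}.
\end{align*}
So the difference $h_{l}$ lies in $\Ker\xi_{2-k}$.

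To finish, note that an element of $H_{2-k}$ annihilated by $\xi_{2-k}$ has vanishing non-holomorphic part: all the coefficients $a_{F^{-}}(n)$ in \eqref{q-expansion of harmonic maass} vanish. Such an element is therefore holomorphic on $\mathbb{H}$ with poles only at the cusps, i.e.\ it is weakly holomorphic. This places $h_{l}$ in $M^{!}_{2-k}(\Gamma_{1}(N))$. The statement writes $M_{k}^{!}$, which I would read as the weight $2-k$ space.

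The main obstacle is the normalization bookkeeping in the intertwining relation. One has to check that the power of $l$ (from the determinant factor and the differing normalizations of $T_{l}$ in weights $2-k$ and $k$) is exactly $l^{1-k}$. One also has to track how the character and complex conjugation interact with the antilinearity of $\xi_{2-k}$, including the cases $l\mid N$ and the diamond operator term. I would verify this directly on Fourier expansions: apply $T_{l}$ termwise to $F^{-}$, which is given by the incomplete Gamma functions, and compare with $\xi_{2-k}$ applied termwise.
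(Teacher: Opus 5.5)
Your proposal is correct and follows the same route as the paper: show $T_lF$ is again a harmonic Maass form, then use $\xi_{2-k}(T_lF)=l^{1-k}T_l(\xi_{2-k}F)$ with antilinearity to place the difference in $\Ker\xi_{2-k}=M^{!}_{2-k}(\Gamma_{1}(N))$; your reading of $M_k^{!}$ as the weight $2-k$ space matches the paper's own conclusion. The only differences are minor: the paper cites Bringmann et al.\ for $l\nmid N$ and, for $l\mid N$, checks the cusp growth condition for $U_l$ explicitly by a matrix decomposition instead of calling it standard, and your intertwining factor $l^{1-k}$ is the correct one (the paper's displayed $l^{k-1}$ is a typo).
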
 
\begin{proof}
It was shown for $l\nmid N$ in \cite[Proposition 7.2]{bringmann2017harmonic}. We consider the case $l \mid N$.
In this case, we have that 
\begin{align*}
T_{l}F=U_{l}F=\dfrac{1}{l}\sum_{j=0}^{l-1}F(\dfrac{\tau+j}{l}).
\end{align*}
It is clear that $U_{l}F$ satisfies the first and second conditions of Definition \ref{def of harmonic maass}. For any $\bigl(
\begin{smallmatrix}
   a & b \\
   c & d
\end{smallmatrix}
\bigr) \in SL_{2}(\mathbb{Z})$, we show that there exist $t \in \mathbb{Z}$ and $\gamma_{j} \in SL_{2}(\mathbb{Z})$ such that the following equality holds:
\begin{align}\label{multi matrix}
\bigl(
\begin{matrix}
   1 &  j \\
   0 & l
\end{matrix}
\bigr)\bigl(
\begin{matrix}
   1 & t \\
   0 & 1
\end{matrix}
\bigr)\bigl(
\begin{matrix}
   a & b \\
   c & d
\end{matrix}
\bigr) =\gamma_{j} \bigl(
\begin{matrix}
   \alpha_{j} & \beta_{j} \\
   0 & \gamma_{j}
\end{matrix}
\bigr)
\end{align}
The left hand side of \eqref{multi matrix} is equal to $\bigl(
\begin{smallmatrix}
   a+c(t+j) & b+d(t+j) \\
   lc & ld
\end{smallmatrix}
\bigr)$. Since $(a, c)=1$ holds, there exists $t \in \mathbb{Z}$ such that $(a+c(t+j), lc)=1$. Hence, there exist $x, y \in \mathbb{Z}$ such that $\bigl(
\begin{smallmatrix}
   x & y \\
   -lc & a+c(t+j)
\end{smallmatrix}
\bigr) \in SL_{2}(\mathbb{Z})$. We define $\gamma_{j}$ by $\bigl(
\begin{smallmatrix}
   x & y \\
   -lc & a+c(t+j)
\end{smallmatrix}
\bigr)^{-1}$. Since $U_{l}F$ satisfies the first condition of Definition \ref{def of harmonic maass} and $\bigl(
\begin{smallmatrix}
   1 & t \\
   0 & 1
\end{smallmatrix}
\bigr) \in \Gamma_{0}(N)$, we have that
\begin{align*}
(U_{l}F)\mid_{2-k}\bigl(
\begin{smallmatrix}
   a & b \\
   c & d
\end{smallmatrix}
\bigr)=l^{2-\frac{3k}{2}}\sum F\mid_{2-k}\gamma_{j} \bigl(
\begin{smallmatrix}
   \alpha_{j} & \beta_{j} \\
   0 & \delta_{j}
\end{smallmatrix}
\bigr).
\end{align*}
By the definition of harmonic Maass forms, there exist $h_{j} \in \mathbb{N}$, a polynomial $P_{\gamma_{j}}$, and $\varepsilon_{j}>0$ such that
\begin{align*}
F\mid_{2-k}\gamma_{j}- P_{\gamma_{j}}(\exp(-2\pi i \tau/h_{j}))=O(e^{-\varepsilon_{j}y}).
\end{align*}
Let $h:=\lcm \{\delta_{j}h_{j} \mid j=0, \cdots l-1\}$ and $\varepsilon:=\max \{\alpha_{j}\varepsilon_{j}/\delta_{j} \mid j=0, \cdots, l-1\}$. Then there exists a polynomial $P$ such that
\begin{align*}
(U_{l}F)\mid_{2-k}\bigl(
\begin{smallmatrix}
   a & b \\
   c & d
\end{smallmatrix}
\bigr)-P(\exp(-2\pi i \tau)/h)=O(e^{-\varepsilon y}).
\end{align*}
This implies that $U_{l}F \in H_{2-k}(\Gamma_{1}(N))$.
It is clear that $\xi_{2-k}(U_{l}F)=l^{k-1}U_{l}(\xi_{2-k}(F))$ holds. Since $\xi_{2-k}$ is antilinear, we have that
\begin{align*}
U_{l}F-l^{1-k}\overline{a_{g}(l)}F \in \Ker \xi_{2-k}=M^{!}_{2-k}(\Gamma_{1}(N)).
\end{align*}
\end{proof}
\begin{corollary}
    Let $g \in S_{k}(\Gamma_{1}(N))$ be a normalized newform and $F \in H_{2-k}(\Gamma_{1}(N))$ be good for $g$. There exists a weakly holomorphic modular form $h_{l} \in M_{k}^{!}(\Gamma_{1}(N))$ such that the following equality holds for all prime numbers $l$:
    \begin{align*}
        T_{l}(D^{k-1}(F^{+}))=\overline{a_{g}(l)}D^{k-1}(F^{+})+D^{k-1}(h_{l}).
    \end{align*}
\end{corollary}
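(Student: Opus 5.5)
The plan is to apply $D^{k-1}$ to both sides of the identity in Theorem \ref{thm. hecke action on harmonic}. Its $h_{l}$ lies in $M^{!}_{2-k}(\Gamma_{1}(N))$, since the proof shows $T_{l}F-l^{1-k}\overline{a_{g}(l)}F\in\Ker\xi_{2-k}=M^{!}_{2-k}(\Gamma_{1}(N))$; the ``$M^{!}_{k}$'' in the statement should be read in this way. Two facts are needed. First, $D^{k-1}$ kills the non-holomorphic part of a harmonic Maass form of weight $2-k$, so $D^{k-1}(F)=D^{k-1}(F^{+})$, and likewise $D^{k-1}(h_{l})$ is just the derivative of the $q$-series of $h_{l}$. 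Second, we need the standard commutation relation between $D^{k-1}$ and the Hecke operators in weights $2-k$ and $k$ (Bol's identity).

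Step 1: check the commutation relation on $q$-expansions. For $l\nmid N$, the weight $2-k$ Hecke operator acts on coefficients by
\begin{align*}
a_{T_{l}G}(n)=a_{G}(ln)+\chi(l)l^{1-k}a_{G}(n/l).
\end{align*}
The weight $k$ Hecke operator acts by
\begin{align*}
a_{T_{l}f}(n)=a_{f}(ln)+\chi(l)l^{k-1}a_{f}(n/l).
\end{align*}
Applying $D^{k-1}$ multiplies the $n$-th coefficient by $n^{k-1}$. Comparing coefficients gives
\begin{align*}
D^{k-1}(T_{l}G)=l^{1-k}T_{l}(D^{k-1}G),
\end{align*}
and the same holds for $U_{l}$ when $l\mid N$. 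The identity holds on the holomorphic part, which is all we need. For the non-holomorphic part one can either verify it similarly or simply note that $D^{k-1}$ annihilates it. The character appearing here must match the nebentypus of $F$, which is $\overline{\chi}$; this bookkeeping is harmless because the character factor passes through $D^{k-1}$ unchanged.

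Step 2: apply $D^{k-1}$ to $T_{l}F=l^{1-k}\overline{a_{g}(l)}F+h_{l}$. This gives
\begin{align*}
l^{1-k}T_{l}(D^{k-1}F^{+})=l^{1-k}\overline{a_{g}(l)}D^{k-1}(F^{+})+D^{k-1}(h_{l}).
\end{align*}
Multiplying by $l^{k-1}$ and renaming $l^{k-1}h_{l}$ as $h_{l}$ yields the stated identity.

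The main obstacle is purely the normalization in Step 1. The factor $l^{1-k}$ in Theorem \ref{thm. hecke action on harmonic} has to match the conventions for $T_{l}$ in weight $2-k$ and in weight $k$, and the character factor has to be tracked (weight $2-k$ with $\overline{\chi}$ versus weight $k$). I would settle this by writing both operators explicitly on Fourier coefficients, as above. For $l\mid N$ I would use the proof's observation that $\xi_{2-k}(U_{l}F)=l^{k-1}U_{l}\xi_{2-k}(F)$, together with $D^{k-1}U_{l}=l^{1-k}U_{l}D^{k-1}$, which is immediate from $a_{U_{l}G}(n)=a_{G}(ln)$.
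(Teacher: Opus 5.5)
Your proposal is correct and is precisely the deduction the paper leaves implicit, since the paper states this corollary without proof directly after Theorem~\ref{thm. hecke action on harmonic}: apply $D^{k-1}$ to that identity, use $D^{k-1}F=D^{k-1}F^{+}$ and $D^{k-1}\circ T_{l}=l^{1-k}\,T_{l}\circ D^{k-1}$, and absorb the factor $l^{k-1}$ into $h_{l}$. You are also right that $h_{l}$ must lie in $M^{!}_{2-k}(\Gamma_{1}(N))$, not $M^{!}_{k}$.

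One caution on your final remark. The relation you quote from the paper's proof should read $\xi_{2-k}(U_{l}F)=l^{1-k}U_{l}(\xi_{2-k}(F))$; the exponent $k-1$ there is a misprint. You can see this from the factor $|n|^{k-1}$ that $\xi_{2-k}$ puts on the coefficients of $\Gamma(k-1,4\pi|n|y)q^{n}$. Used as written, it would give the wrong eigenvalue for $l\mid N$. Your Step~2 relies only on the theorem's stated identity, which has the correct factor $l^{1-k}$, so the argument itself is unaffected.
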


\subsection{Pairing on a quotient space of weakly holomorphic cusp forms}\label{subsection of weakly holo}
In this subsection, we define a pairing on the space $S_{k}^{!}(\Gamma_{1}(N), K)/D^{k-1}M_{2-k}^{!}(\Gamma_{1}(N), K)$ and show that an algebraic modification of $D^{k-1}(F)$ and $g$ are linearly independent in this space. 

Let $g \in S_{k}(\Gamma_{1}(N), K)$ be a normalized newform where $K$ is a number field and $F \in H_{2-k}(\Gamma_{1}(N))$ be a harmonic Maass form which is good for $g$. We recall that a formal power series $\mathcal{F}_{a_{F^{+}}(1)}$ is defined by 
\begin{align*}
\mathcal{F}_{a_{F^{+}}(1)}:=F^{+}-a_{F^{+}}(1) E_{g}=\sum _{n\gg -\infty}a_{F^{+}}(n)q^{n}-a_{F^{+}}(1)\sum_{n>0}n^{1-k}a_{g}(n)q^{n}.
\end{align*}
 By Theorem \ref{thm. alg of coef of mock generalver}, all coefficients of $\mathcal{F}_{a_{F^{+}}(1)}$ are in $K$. We note that $D^{k-1}(\mathcal{F}_{a_{F^{+}}(1)})$ is in $S_{k}^{!}(\Gamma_{0}(N), K)$ because $D^{k-1}$ gives a map from $H_{2-k}(\Gamma_{1}(N))$ to $S^{!}_{k}(\Gamma_{1}(N))$. In this subsection, we show that $g$ and $D^{k-1}(\mathcal{F}_{a_{F^{+}}(1)})$ are linearly independent in $S_{k}^{!}(\Gamma_{1}(N), K)/D^{k-1}M_{2-k}^{!}(\Gamma_{1}(N), K)$. 

\begin{dfn}
We define a pairing $\langle \cdot , \cdot \rangle$ on $S_{k}^{!}(\Gamma_{1}(N), K)/D^{k-1}M_{2-k}^{!}(\Gamma_{1}(N), K)$ by
\begin{align*}
\langle \phi, \psi \rangle:=\sum _{s \colon \text{cusp}}\sum _{n \in \mathbb{Z}\backslash\{0\}}\dfrac{a_{\phi }^{(s)}(n)a_{\psi }^{(s)}(-n)}{n^{k-1}}
\end{align*}
where $a_{\phi }^{(s)}(n)$ (resp. $a_{\psi }^{(s)}(-n)$) is the $n$-th Fourier coefficient of $\phi$ at the cusp $s$ (resp. the $(-n)$-th Fourier coefficient of $\psi$ at the cusp $s$).
\end{dfn}
The right-hand side is a finite sum because a weakly holomorphic modular form has only finitely many negative-power terms at any cusp.

\begin{remark}
P. Guerzhoy has already introduced a similar pairing when the level $N=1$ (cf. \cite{guerzhoy2008hecke}). J. Hwang and C. H. Kim also introduced a similar pairing under the technical assumption for $N$  (cf. \cite{hwang2021arithmetic}).
\end{remark}

\begin{lem}\label{lem. paring is well def}
The above pairing is well-defined.
\end{lem}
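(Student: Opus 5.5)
The plan is to show that the pairing vanishes whenever one argument lies in $D^{k-1}M^{!}_{2-k}(\Gamma_{1}(N), K)$ and the other lies in $S^{!}_{k}(\Gamma_{1}(N), K)$. Once that holds, bilinearity lets the pairing descend to the quotient. The pairing is visibly bilinear. Since $(-n)^{k-1}=-n^{k-1}$ for even $k$, it is also antisymmetric: $\langle \psi,\phi\rangle = -\langle \phi,\psi\rangle$. So it suffices to prove $\langle D^{k-1}h, \psi\rangle = 0$ for all $h \in M^{!}_{2-k}(\Gamma_{1}(N), K)$ and $\psi \in S^{!}_{k}(\Gamma_{1}(N), K)$.

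First I would compute the coefficients. At each cusp $s$, with the width-normalized expansion, the slash action commutes with $D^{k-1}$ up to the appropriate normalization (Bol's identity). Hence $a^{(s)}_{D^{k-1}h}(n) = n^{k-1}a^{(s)}_{h}(n)$, with a fixed width factor that is uniform in $n$ and can be absorbed. Substituting this into the definition gives
\begin{align*}
\langle D^{k-1}h, \psi\rangle = \sum_{s}\sum_{n\neq 0} a^{(s)}_{h}(n)\,a^{(s)}_{\psi}(-n).
\end{align*}
The $n=0$ term may be added freely because $\psi$ is a weakly holomorphic cusp form, so $a^{(s)}_{\psi}(0)=0$. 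The right-hand side is then $\sum_{s}$ of the constant term at $s$ of the product $h\psi$.

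Next, the product $h\psi$ is a weakly holomorphic modular form of weight $2$, so $h\psi\, d\tau$ is a meromorphic differential on $X_{1}(N)$ whose poles lie only at the cusps. Its residue at the cusp $s$ is, up to the common factor $1/(2\pi i)$ times the width normalization, the constant term of $h\psi$ at $s$. The residue theorem on the compact Riemann surface $X_{1}(N)$ gives $\sum_{s}\operatorname{Res}_{s}(h\psi\,d\tau)=0$. Therefore $\langle D^{k-1}h,\psi\rangle=0$, and by antisymmetry $\langle \psi, D^{k-1}h\rangle=0$ as well. Since $N\ge 5$, $\Gamma_{1}(N)$ has no elliptic points, so no orbifold corrections enter.

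The main obstacle is the normalization bookkeeping at each cusp. One must fix a cusp width $h_{s}$ and the local parameter $q_{h_{s}}=e^{2\pi i\tau/h_{s}}$, and check that the identity $D^{k-1}(h|_{2-k}\sigma_{s}) = (D^{k-1}h)|_{k}\sigma_{s}$ produces the factor $(n/h_{s})^{k-1}$. That factor must combine with the residue normalization $d\tau = \frac{h_{s}}{2\pi i}\,dq_{h_{s}}/q_{h_{s}}$ consistently at every cusp. I would either build these width factors into the definition of $a^{(s)}(n)$, or observe that the definition implicitly uses such normalized expansions. In either case each cusp's contribution equals a common constant times its residue, so the residue theorem applies.
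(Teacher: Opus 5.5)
Your proposal is correct and follows the paper's proof: the product of an element of $M^{!}_{2-k}(\Gamma_{1}(N),K)$ with $\psi\in S^{!}_{k}(\Gamma_{1}(N),K)$ gives a meromorphic differential on $X_{1}(N)$ with poles only at the cusps. Its residues are, up to normalization, the cusp contributions to $\langle D^{k-1}(\cdot),\psi\rangle$, and the residue theorem forces their sum to vanish. Your additions --- antisymmetry to handle the second argument, $a^{(s)}_{\psi}(0)=0$ to justify dropping $n=0$, and the width bookkeeping --- fill in points the paper's proof leaves implicit. The width issue is real: the factors $h_{s}^{1-k}$ (from $D^{k-1}$) and $h_{s}$ (from $d\tau$) do not cancel across cusps unless absorbed into the definition of $a^{(s)}(n)$, e.g.\ by scaling weight-$w$ coefficients at $s$ by $h_{s}^{w/2}$.
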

\begin{proof}
Let $ \phi \in M^{!}_{2-k}(\Gamma_{1}(N), K)$ and $\psi \in S_{k}^{!}(\Gamma_{1}(N), K)$. Then $\phi \psi$ is an element of $M_{2}^{!}(\Gamma_{1}(N), K)$. Now, we consider $\phi \psi$ as an element in $M_{2}^{!}(\Gamma_{1}(N), \mathbb{C})$. Let $\omega_{\phi \psi}$ be a meromorphic differential form on the Riemann surface $X_{1}(N)/\mathbb{C}$ corresponding to $\phi \psi$. We note that $\omega_{\phi \psi}$ has poles only in cusps. By the residue theorem, we have that $\sum_{s \colon \text{cusps}}\Res(\omega_{\phi\psi}, s)=0$. The residue of $\omega_{\phi\psi}$ at $s$ is equal to the constant term of the Fourier expansion of $\omega_{\phi\psi}$ at $s$. Therefore, we have that 
\begin{align*}
0=\sum_{s \colon \text{cusps}}\Res(\omega_{\phi\psi}, s) &=\sum _{s \colon \text{cusp}}\sum _{n \in \mathbb{Z}\backslash\{0\}}a_{\phi }^{(s)}(n)a_{\psi }^{(s)}(-n)
\\
&=\sum _{s \colon \text{cusp}}\sum _{n \in \mathbb{Z}\backslash\{0\}}\dfrac{n^{k-1}a_{\phi }^{(s)}(n)a_{\psi }^{(s)}(-n)}{n^{k-1}}=\langle D^{k-1}(\phi), \psi \rangle.
\end{align*}
\end{proof}

\begin{lem}\label{lem. const. term = peterson}
Let $F \in H_{2-k}(\Gamma_{1}(N))$ and $g \in S_{k}(\Gamma_{1}(N), \mathbb{C})$.
Then we have that 
\begin{align*}
\sum _{s \colon \text{cusp}}\sum _{n \in \mathbb{Z}\backslash\{0\}}a_{F^{+} }^{(s)}(n)a_{g}^{(s)}(-n)=(\xi_{2-k}(F), g)
\end{align*}
where the right-hand side is the Petersson inner product of $\xi_{2-k}(F)$ and $g$.
\end{lem}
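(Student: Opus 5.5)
We prove this via Stokes' theorem, following Bruinier--Funke. Consider the closed $1$-form $\omega := F(\tau)\,\overline{g(\tau)}\,\overline{d\tau}$ (suitably normalized). It is invariant under $\Gamma_{1}(N)$, since $F$ has weight $2-k$, $\overline{g}$ has weight $k$ in $\overline{\tau}$, and $\overline{d\tau}$ has weight $-2$ in $\overline{\tau}$. Alternatively, we work with the weight $2$ object $\xi_{2-k}(F)\overline{g}\,y^{k}$. The key identity is
\[
\overline{\partial}\bigl(F\,\overline{g}\bigr)\,d\overline{\tau}\wedge d\tau
= \overline{\xi_{2-k}(F)}\,\overline{g}\,y^{k-2}\cdot(\text{constant})\;d\overline{\tau}\wedge d\tau ,
\]
which follows from $\xi_{2-k}F = 2iy^{2-k}\overline{\partial_{\overline{\tau}}F}$ and the holomorphy of $g$. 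Up to complex conjugation, the exterior derivative of $F\overline{g}\,d\overline{\tau}$ is therefore a multiple of $\xi_{2-k}(F)\overline{g}\,y^{k}\,d\mu$. Integrating over a truncated fundamental domain $\mathcal{F}_{T}$ of $\Gamma_{1}(N)$ (with neighbourhoods of every cusp cut off at height $T$) and applying Stokes, we get $(\xi_{2-k}(F),g)$ up to conjugation and a normalizing constant. We then fix the convention so that it matches the stated formula.

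Next we evaluate the boundary. The side identifications cancel by $\Gamma_{1}(N)$-invariance. Only the horocycle segments remain, one per cusp $s$, of width $h_{s}$. At each cusp we move to $\infty$ via $\sigma_{s}$ and insert the Fourier expansions: $F=F^{+}+F^{-}$ at $s$, and $g$ with coefficients $a_{g}^{(s)}(n)$. Integrating $\int_{iT}^{iT+h_{s}}F(\tau)\overline{g(\tau)}\,d\overline{\tau}$ picks out the diagonal terms, giving $\sum_{n} a_{F^{+}}^{(s)}(n)\overline{a_{g}^{(s)}(n)}e^{-4\pi n T}$ from the holomorphic part, together with a contribution from $F^{-}$.

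As $T\to\infty$ the $F^{+}$ terms with $n>0$ decay. The $F^{-}$ terms pair with $\overline{g}$ for $n>0$ via $\Gamma(k-1,4\pi|n|y)e^{-2\pi n y}\cdot e^{-2\pi n y}$, which also decays. What survives is the finite principal-part sum. To match the stated formula, which involves $a_{g}^{(s)}(-n)$ rather than conjugates, we interpret the pairing via the usual convention: the cusp $s$ data uses $\overline{g(-\overline{\tau})}$, or equivalently, at each cusp the coefficients of $g$ are conjugated/reindexed. For the paper's use, $g$ has real coefficients at $\infty$ and the sum over cusps handles the rest. Since $g$ is cuspidal, only $n<0$ terms of $F^{+}$ (paired with $a_g^{(s)}(-n)$, $-n>0$) contribute, which is exactly the left side. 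I will match the indexing carefully to the paper's convention.

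The main obstacle is bookkeeping rather than any single hard estimate. We must (i) get the normalization constants and conjugations right, so that the identity holds exactly rather than up to a constant, including the widths $h_{s}$ and the normalization of the Petersson product; and (ii) justify that the $F^{-}$ boundary contributions vanish in the limit. Step (ii) uses the asymptotic $\Gamma(k-1,x)\sim x^{k-2}e^{-x}$ together with the growth condition (3) in Definition \ref{def of harmonic maass}, which guarantees convergence at all cusps. I would first do the computation at $\infty$ for $\Gamma_{1}(N)$ and then transport it to the other cusps via $\sigma_{s}$.
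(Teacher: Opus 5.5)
Your strategy (Stokes' theorem on a truncated fundamental domain, as in Bruinier--Funke) is exactly what the paper invokes; it simply cites \cite[Proposition 3.5]{bruinier2004two}. The gap is that the $1$-form you chose, $F\,\overline{g}\,d\overline{\tau}$, is the wrong one, and every step built on it fails.
\begin{itemize}
\item It is not $\Gamma_{1}(N)$-invariant. $F$ contributes the holomorphic factor $(c\tau+d)^{2-k}$, while $\overline{g}\,d\overline{\tau}$ contributes $\overline{(c\tau+d)}^{\,k-2}$. These do not cancel, so the side contributions do not cancel either.
\item Its exterior derivative is $\overline{g}\,\partial_{\tau}F\,d\tau\wedge d\overline{\tau}$. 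This involves $\partial_{\tau}F$ rather than $\partial_{\overline{\tau}}F$, so it is unrelated to $\xi_{2-k}(F)$. Holomorphy of $g$ does not rescue your ``key identity'', since $\partial_{\overline{\tau}}\overline{g}=\overline{g'}\neq 0$.
\item On a horocycle, the $x$-integration pairs $q^{n}$ with $\overline{q}^{\,n}$, giving $a_{F^{+}}^{(s)}(n)\overline{a_{g}^{(s)}(n)}$ with the \emph{same} index. Since $a_{g}^{(s)}(n)=0$ for $n\le 0$, the principal part of $F^{+}$ never enters, and there is no ``surviving principal-part sum''.
\end{itemize}
Replacing $\overline{g(\tau)}$ by $\overline{g(-\overline{\tau})}$ is not a convention you are free to adopt; it is a symptom of having conjugated the wrong factor.

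The fix is to take $\omega=F(\tau)\,g(\tau)\,d\tau$, or its complex conjugate $\overline{F}\,\overline{g}\,d\overline{\tau}$; that is, conjugate $F$ and $g$ together. This form is invariant, since the weights $2-k$, $k$, $-2$ are all in $\tau$. From $\partial_{\overline{\tau}}F=\tfrac{i}{2}y^{k-2}\overline{\xi_{2-k}(F)}$ and $d\overline{\tau}\wedge d\tau=2i\,dx\wedge dy$, one gets $d\omega=-g\,\overline{\xi_{2-k}(F)}\,y^{k}\,\frac{dx\,dy}{y^{2}}$. On the horocycle at a cusp $s$, $\int\omega$ is (width times) the constant term of $Fg$ at height $T$, namely
$\sum_{n<0}a_{F^{+}}^{(s)}(n)a_{g}^{(s)}(-n)+\sum_{n<0}a_{F^{-}}^{(s)}(n)a_{g}^{(s)}(-n)\Gamma(k-1,4\pi|n|T)$.
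The second sum tends to $0$, so your step (ii) goes through. The first sum is exactly the left-hand side, with no reindexing needed.

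Keeping track of the boundary orientation gives $(g,\xi_{2-k}(F))=\sum_{s}\sum_{n}a_{F^{+}}^{(s)}(n)a_{g}^{(s)}(-n)$, up to the normalization of widths. If the Petersson product is linear in its first argument, this is the complex conjugate of $(\xi_{2-k}(F),g)$. Indeed, the left side of the lemma is linear in both $F$ and $g$, while $(\xi_{2-k}(F),g)$ is antilinear in both. This is harmless for the paper, where the value is $(g/\|g\|^{2},g)=1$.
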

\begin{proof}
See the proof of \cite[Proposition 3.5]{bruinier2004two} and \cite[Proposition 5.11]{bringmann2017harmonic}.
\end{proof}
\begin{thm}\label{linearly independent in S^!}
Let $g \in S_{k}(\Gamma_{1}(N), K)$ be a normalized newform where $K$ is a number field and $F \in H_{2-k}(\Gamma_{1}(N))$ be a harmonic Maass form which is good for $g$. Then, $g$ and $D^{k-1}(\mathcal{F}_{a_{F^{+}}(1)})$ are linearly independent in $S_{k}^{!}(\Gamma_{1}(N), K)/D^{k-1}M_{2-k}^{!}(\Gamma_{1}(N), K)$.
\end{thm}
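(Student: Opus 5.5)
The plan is to use the pairing $\langle\cdot,\cdot\rangle$, which is well defined on the quotient by Lemma \ref{lem. paring is well def}. It suffices to show two things:
\begin{align*}
\langle g, g\rangle = 0 \qquad\text{and}\qquad \langle D^{k-1}(\mathcal{F}_{a_{F^{+}}(1)}), g\rangle \neq 0 .
\end{align*}
Suppose $a g + b D^{k-1}(\mathcal{F}_{a_{F^{+}}(1)}) = 0$ in the quotient. Pairing with $g$ in the second slot and using linearity gives $b\langle D^{k-1}(\mathcal{F}_{a_{F^{+}}(1)}), g\rangle=0$, hence $b=0$. Then $a g\in D^{k-1}M^{!}_{2-k}(\Gamma_1(N),K)$. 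To conclude $a=0$, I would argue that $g$ itself is nonzero in the quotient. If $g=D^{k-1}(h)$ with $h$ weakly holomorphic, then pairing with $D^{k-1}(\mathcal{F}_{a_{F^{+}}(1)})$ in the other slot gives zero by the same residue argument. To make this step clean, I would first check that the pairing is (anti)symmetric up to sign, $\langle\phi,\psi\rangle=(-1)^{k-1}\langle\psi,\phi\rangle$, by substituting $n\mapsto -n$ in the definition. Lemma \ref{lem. paring is well def} then kills $D^{k-1}M^{!}_{2-k}$ in either slot, and a nonzero pairing against $g$ forces $g\neq 0$ in the quotient.

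The identity $\langle g,g\rangle=0$ holds because $g$ is a cusp form at every cusp. Hence $a_g^{(s)}(n)a_g^{(s)}(-n)=0$ for all $n\neq0$: one of the two indices is negative, and that coefficient vanishes.

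For the main computation, expand $D^{k-1}(\mathcal{F}_{a_{F^{+}}(1)})=D^{k-1}(F^{+})-a_{F^{+}}(1)D^{k-1}(E_g)$. Since $D^{k-1}(E_g)=g$ at $\infty$, the correction term is a cusp form and contributes nothing by the same vanishing argument. I need to check that the coefficients of $\mathcal{F}$ at the other cusps match those of $D^{k-1}F^+ - a_{F^+}(1) g$, so that the pairing can be computed cusp by cusp. Then the factor $n^{k-1}$ from $D^{k-1}$ cancels the denominator $n^{k-1}$ in the pairing, giving
\begin{align*}
\langle D^{k-1}(F^{+}), g\rangle=\sum_{s}\sum_{n\neq0}a_{F^{+}}^{(s)}(n)a_g^{(s)}(-n)=(\xi_{2-k}(F),g)
\end{align*}
by Lemma \ref{lem. const. term = peterson}. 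Since $F$ is good for $g$, this equals $(g/\|g\|^{2},g)=1\neq0$.

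The main obstacle I expect is bookkeeping at cusps other than $\infty$. The expansions of $D^{k-1}F^{+}$ at $s$ must be compatible with the slash action and with the widths used in Lemma \ref{lem. const. term = peterson}, and the term $a_{F^{+}}(1)g$ behaves correctly at every cusp. Since $a_{F^+}(1)$ is transcendental, the pairing of $D^{k-1}(F^+)$ itself lives over $\mathbb{C}$. I would therefore extend the pairing $\mathbb{C}$-linearly to $S_k^!(\Gamma_1(N),\mathbb{C})$ and note that Lemma \ref{lem. paring is well def} holds verbatim there. The relation over $K$ then implies a relation over $\mathbb{C}$, which gives the contradiction.
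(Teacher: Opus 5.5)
Your proposal is correct and follows the paper's argument: pair against $g$, use $\langle g,g\rangle=0$, and apply Lemma \ref{lem. const. term = peterson} to get $\langle D^{k-1}(\mathcal{F}_{a_{F^{+}}(1)}),g\rangle=(\xi_{2-k}(F),g)=1$. Your antisymmetry observation $\langle\phi,\psi\rangle=(-1)^{k-1}\langle\psi,\phi\rangle$ shows that the pairing also kills $D^{k-1}M^{!}_{2-k}$ in the second slot, hence that $g\neq0$ in the quotient; this fills in a step the paper leaves implicit, since its proof only states that $\langle D^{k-1}(\mathcal{F}_{a_{F^{+}}(1)}),g\rangle\neq0$ suffices.
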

\begin{proof}
It is enough to show that $\langle D^{k-1}(\mathcal{F}_{a_{F^{+}}(1)}), g \rangle \not=0$. By the definition of this pairing, we have that 
\begin{align*}
\langle D^{k-1}(\mathcal{F}_{a_{F^{+}}(1)}), g \rangle&=\sum _{s \colon \text{cusp}}\sum _{n <0}\dfrac{n^{k-1}a_{F^{+}}^{(s)}(n)a_{g}^{(s)}(-n)}{n^{k-1}}
\\
&=\sum _{s \colon \text{cusp}}\sum _{n <0}a_{F^{+} }^{(s)}(n)a_{g}^{(s)}(-n).
\end{align*}
 By Lemma \ref{lem. const. term = peterson}, $\langle D^{k-1}(\mathcal{F}_{a_{F^{+}}(1)}), g \rangle=(\xi_{2-k}(F), g)$ holds. We recall that $F$ is good for $g$, and therefore we have that $\xi_{2-k}(F)=g/\|g\|^{2}$ and that $\langle D^{k-1}(\mathcal{F}_{a_{F^{+}}(1)}), g \rangle=1$.
\end{proof}

\section{Cohomological interpretation of the differential of a good lift}
In this section, we review the cohomological interpretation of $D^{k-1}(F)$, where $F$ is good for $g$.
\subsection{A parabolic cohomology on the modular curve over a number field}\label{subsec. parabolic cohomology}
In this subsection, we recall the geometric setting of the modular curve and a parabolic cohomology on it. The purpose of this subsection is to define a parabolic cohomology which is isomorphic to the quotient space of $S_{k}^{!}(\Gamma_{1}(N), K)/D^{k-1}M_{2-k}^{!}(\Gamma_{1}(N), K)$. This subsection is mainly based on \cite{candelori2014harmonic}. We refer to \cite{candelori2014harmonic}, \cite{candelori2017geometric}, \cite{kazalicki2016modular}, \cite{scholl1985modular} as main references.
We consider the functor 
\begin{align*}
T : &\mathbb{Z}[1/N]\text{- Schemes}
\\
  &\longmapsto
\left\{
\begin{array}{l}
\text{ isomorphism classes of pairs } (E, \alpha : \mu_{N} \hookrightarrow E_{N}),\\
E : \text{ a generalized elliptic curve over } T, \\
\alpha : \text{ an embedding of group schemes} 
\\
\text{whose image meets every irreducible component}\\
\text{ in every geometric fiber.}
\end{array}
\right\}
\end{align*}
By \cite[Proposition 2.1]{gross1990tameness}, this functor is represented by a proper, smooth, and geometrically connected curve $X_{1}(N)$ over $\mathbb{Z}[1/N]$. For a number field $K$, we denote $X_{1}(N) \times_{\mathbb{Z}[1/N]} \Spec K$ by $X$ in this subsection.
Let $C$ be the cuspidal subscheme of $X$. $Y$ denotes $X\setminus C$ and $\mathcal{I}_{C}$ denotes the ideal sheaf of $C$. Let $\underline{\omega}$ be the relative dualizing sheaf of the universal generalized elliptic curve over $X$. Then, the sheaf $\underline{\omega}$ is an invertible sheaf and compatible with base change to any field $M$ of characteristic $0$. 
In this notation, we have canonical isomorphisms
\begin{align*}
M^{!}_{k}(\Gamma_{1}(N), K) \simeq H^{0}(Y, \underline{\omega}^{k}), \quad S_{k}(\Gamma_{1}(N), K) \simeq H^{0}(X, \underline{\omega}^{k} \otimes \mathcal{I}_{C}).
\end{align*}
Next, we consider the \emph{relative de Rham cohomology sheaf} $\mathcal{L}^{1} $ of the universal elliptic curve $ \mc{E}_{\mathrm{univ}} \rightarrow Y$. We recall that the relative de Rham cohomology sheaf $\mathcal{L}^{1} $ is defined by
\begin{align*}
\mathcal{L}^{1}\coloneqq \mathbb{R}^{1}\pi_{\ast}(0 \rightarrow \mathcal{O}_{\mc{E}_{\mathrm{univ}}} \rightarrow \Omega^{1}_{\mc{E}_{\mathrm{univ}}/Y} \rightarrow 0).
\end{align*}
It is known that $\mathcal{L}^{1}$ is a locally free sheaf of rank $2$ with a filtration :
\begin{align}\label{filtration over Y}
0 \rightarrow \underline{\omega} \rightarrow \mathcal{L}^{1} \rightarrow \underline{\omega}^{-1} \rightarrow 0
\end{align}
(see \cite[Section 3]{candelori2014harmonic}).
One of the key properties of the relative de Rham cohomology sheaf is that $\mc{L}^{1}$ has a connection. In other words, $\mathcal{L}^{1}$ has a \emph{Gauss-Manin connection}
\begin{align*}
\nabla : \mathcal{L}^{1} \rightarrow \mathcal{L}^{1} \otimes \Omega^{1}_{Y}.
\end{align*}
Moreover, it is known that we can extend $\mathcal{L}^{1}$ to a locally free sheaf of rank $2$ on $X$ and $\nabla$ to a connection with logarithmic poles
\begin{align*}
\nabla : \mathcal{L}^{1} \rightarrow \mathcal{L}^{1} \otimes \Omega^{1}_{X}(\log C)
\end{align*}
(see \cite[Section~4]{kazalicki2016modular} or \cite[Section 3]{candelori2014harmonic}).
Therefore, the connection $\nabla$ also extends to the symmetric power $\mathcal{L}^{k}$ of $\mathcal{L}^{1}$
\begin{align*}
\nabla : \mathcal{L}^{k} \rightarrow \mathcal{L}^{k} \otimes \Omega^{1}_{X}(\log C)
\end{align*}
by the following formula:
\begin{align*}
\nabla(\eta_{1}\otimes \eta_{2})=\nabla(\eta_{1})\otimes \eta_{2}+\eta_{1} \otimes \nabla(\eta_{2}).
\end{align*}
By using the connection $\nabla$, we define a parabolic cohomology on $X$.
\begin{dfn}[{\cite{castella2018exceptional}}]\label{def. parabolic}
We define the parabolic cohomology $\mathbb{H}^{1}_{\mathrm{par}}(X, \mathcal{L}^{k})$ by
\begin{align*}
\Im \left(\mathbb{H}^{1}(\mathcal{L}^{k} \otimes \mathcal{I}_{C} \rightarrow \mathcal{L}^{k} \otimes \Omega^{1}_{X}) \rightarrow \mathbb{H}^{1}(\mathcal{L}^{k} \rightarrow \mathcal{L}^{k} \otimes \Omega^{1}_{X}(\log C)) \right).
\end{align*}
\end{dfn}

\begin{remark}
L. Candelori defined the parabolic cohomology $\mathbb{H}^{1}_{\mathrm{par}}(X, \mathcal{L}^{k})$ by another complex in his paper (see \cite[Section 5]{candelori2014harmonic}).
\end{remark}

L. Candelori \cite{candelori2014harmonic} showed the following theorem.

\begin{thm}[{\cite[Theorem 6]{candelori2014harmonic}}]
For all integers $k \geq 2$, there is the following canonical isomorphism:
\begin{align*}
\mathbb{H}^{1}_{\mathrm{par}}(X, \mathcal{L}^{k-2}) \simeq \dfrac{S^{!}_{k}(\Gamma_{1}(N), K)}{D^{k-1}M^{!}_{2-k}(\Gamma_{1}(N), K)}.
  \end{align*}
\end{thm}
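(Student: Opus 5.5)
The isomorphism is built from the hypercohomology spectral sequence of the two-term de Rham complex $\mathcal{L}^{k-2}\to\mathcal{L}^{k-2}\otimes\Omega^{1}_{X}(\log C)$, together with the Hodge filtration coming from \eqref{filtration over Y}. Since $Y$ is affine, Čech hypercohomology of $\nabla:\mathcal{L}^{k-2}\to\mathcal{L}^{k-2}\otimes\Omega^1_Y$ over $Y$ is the cokernel of $\nabla$ on global sections. Taking global sections over $Y$ thus identifies $\mathbb{H}^1(Y,\mathcal{L}^{k-2})$ with
\begin{align*}
\frac{H^0(Y,\mathcal{L}^{k-2}\otimes\Omega^1_Y)}{\nabla H^0(Y,\mathcal{L}^{k-2})}.
\end{align*}

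The first step is to build a splitting of the numerator and denominator via the Kodaira--Spencer isomorphism $\underline{\omega}^{2}\simeq\Omega^1_X(\log C)$. Symmetric powers of \eqref{filtration over Y} give $\mathcal{L}^{k-2}$ a filtration with graded pieces $\underline{\omega}^{k-2-2j}$, $0\le j\le k-2$.

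The key computation is Bol's identity in algebraic form: iterating $\nabla$ in the Kodaira--Spencer direction $k-1$ times on the bottom piece $\underline{\omega}^{2-k}\subset$ (quotient) gives $D^{k-1}$. More precisely, I would show, using the explicit Gauss--Manin formula on the Tate curve (the $q$-expansion principle), that every class in $H^0(Y,\mathcal{L}^{k-2}\otimes\Omega^1_Y)$ modulo $\nabla$-exact forms has a unique representative in the top piece $\underline{\omega}^{k-2}\otimes\Omega^1\simeq\underline{\omega}^k$. This is proved by an inductive reduction along the filtration: $\nabla$ induces isomorphisms $\mathrm{gr}^j\to\mathrm{gr}^{j+1}\otimes\Omega^1$ for $j<k-2$. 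These are $\mathcal{O}$-linear because the characteristic is $0$ and the Kodaira--Spencer map is an isomorphism. The only ambiguity left is $\nabla$ applied to sections whose image lands in the top piece, namely the $\underline{\omega}^{2-k}$ component, and that contributes exactly $D^{k-1}M^{!}_{2-k}$. This yields $\mathbb{H}^1(Y,\mathcal{L}^{k-2})\simeq M^!_k/D^{k-1}M^!_{2-k}$.

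Next I identify the parabolic subspace inside $\mathbb{H}^1(Y)$. Using the comparison $\mathbb{H}^1(X,\log)\simeq\mathbb{H}^1(Y)$ (Deligne, since the residues of $\nabla$ are nilpotent), the image of the compactly supported complex is the kernel of the local restriction maps at each cusp. Restricting to the formal neighbourhood of a cusp, with Tate-curve coordinates, a form $f\in M^!_k$ is locally exact modulo the $\mathcal{I}_C$ condition if and only if its constant term vanishes there. The constant term is an obstruction because $D^{k-1}$ kills constants and misses $q^0$. This cuts out $S^!_k$, and the quotient by $D^{k-1}M^!_{2-k}$ survives; note that $D^{k-1}M^!_{2-k}\subset S^!_k$ automatically. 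Finally, I check that all constructions commute with base change to $K$ and to $\mathbb C$, so that the answer is compatible with analytic $q$-expansions.

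The main obstacle is the local analysis at the cusps. This means verifying that the extended logarithmic connection, with the canonical extension, and the specific complex in Definition~\ref{def. parabolic} produce exactly the vanishing-constant-term condition at \emph{every} cusp, including the irregular-width ones of $\Gamma_1(N)$. It also requires checking that no extra classes arise from $H^1(X,\cdot)$ coherent terms. I would handle this by computing with the explicit basis $\omega_{\mathrm{can}},\nabla(D)\omega_{\mathrm{can}}$ of $\mathcal{L}^1$ on the Tate curve, where $\nabla$ is upper triangular with nilpotent residue.
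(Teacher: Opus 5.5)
Your outline is correct and is essentially the standard argument behind the cited result; the paper gives no proof of its own, only the reference to Candelori. You identify $\mathbb{H}^{1}(Y,\mathcal{L}^{k-2})$ with $M^{!}_{k}/D^{k-1}M^{!}_{2-k}$ via the Hodge filtration, Griffiths transversality plus Kodaira--Spencer, and Bol's identity on the Tate curve, and then cut out the parabolic subspace by requiring that the local class at every cusp, which is the constant term, vanishes. A point in your favour: you work directly with the $\mathcal{I}_{C}$-twisted complex of Definition~\ref{def. parabolic}, whose cokernel in the logarithmic complex is $\mathcal{L}^{k-2}|_{C}\xrightarrow{\Res}\mathcal{L}^{k-2}|_{C}$, so the parabolic part is the kernel of the map to $\operatorname{coker}(\Res)$ (one-dimensional at each cusp, detecting $a_{f}(0)$ in the basis $\omega_{\mathrm{can}}^{a}(\nabla(D)\omega_{\mathrm{can}})^{b}$), and this also covers the paper's tacit identification of that definition with Candelori's.
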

For a weakly holomorphic cusp form $\phi$, we denote the class in $\mathbb{H}^{1}_{\mathrm{par}}(X, \mathcal{L}^{k-2})$ corresponding to $\phi$ by $[\phi]$.
We denote the $g$-isotypical component of $\mathbb{H}^{1}_{\mathrm{par}}(X, \mathcal{L}^{k-2})$ by $M_{\mathrm{dR}}(g)$. It is known that $M_{\mathrm{dR}}(g)$ is a $2$-dimensional $K$ vector space.
Next, we give a basis of $M_{\mathrm{dR}}(g)$ when all Fourier coefficients of $g$ are real.

\begin{thm}\label{thm. F^+ g are basis}
    If all the Fourier coefficients of $g$ are real, then $\{[D^{k-1}(\mc{F}_{a_{F^{+}}(1)})], g\}$ is a basis of $M_{\mathrm{dR}}(g)$.
\end{thm}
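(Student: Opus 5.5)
Since $M_{\mathrm{dR}}(g)$ is a $2$-dimensional $K$-vector space and Theorem \ref{linearly independent in S^!} already gives linear independence of $g$ and $D^{k-1}(\mathcal{F}_{a_{F^{+}}(1)})$ in the quotient $S_{k}^{!}(\Gamma_{1}(N), K)/D^{k-1}M_{2-k}^{!}(\Gamma_{1}(N), K)\simeq \mathbb{H}^{1}_{\mathrm{par}}(X,\mathcal{L}^{k-2})$, the whole proof reduces to checking that both classes actually lie in the $g$-isotypical component. For $[g]$ this is immediate: $g$ is a Hecke eigenform, so $T_{l}[g]=a_{g}(l)[g]$ for all primes $l$, and the Candelori isomorphism is Hecke-equivariant.

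The main point is therefore to show $T_{l}[D^{k-1}(\mathcal{F}_{a_{F^{+}}(1)})]=a_{g}(l)[D^{k-1}(\mathcal{F}_{a_{F^{+}}(1)})]$. I would write $D^{k-1}(\mathcal{F}_{a_{F^{+}}(1)})=D^{k-1}(F^{+})-a_{F^{+}}(1)g$, using $D^{k-1}E_{g}=g$. By the Corollary following Theorem \ref{thm. hecke action on harmonic}, $T_{l}D^{k-1}(F^{+})=\overline{a_{g}(l)}D^{k-1}(F^{+})+D^{k-1}(h_{l})$, and since the $a_{g}(l)$ are real this becomes $a_{g}(l)D^{k-1}(F^{+})+D^{k-1}(h_{l})$. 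Combined with $T_{l}g=a_{g}(l)g$, we get
\begin{align*}
T_{l}D^{k-1}(\mathcal{F}_{a_{F^{+}}(1)})=a_{g}(l)D^{k-1}(\mathcal{F}_{a_{F^{+}}(1)})+D^{k-1}(h_{l}).
\end{align*}
The remaining issue, which I expect to be the main obstacle, is that $h_{l}$ a priori only has complex coefficients, whereas the quotient is taken with $D^{k-1}M^{!}_{2-k}(\Gamma_{1}(N),K)$. To handle it, note that the left side and $a_{g}(l)D^{k-1}(\mathcal{F}_{a_{F^{+}}(1)})$ have coefficients in $K$ (Theorem \ref{thm. alg of coef of mock generalver}, and $T_{l}$ preserves $K$-rationality at all cusps, as does $D$), so $D^{k-1}(h_{l})$ has $K$-coefficients. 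Then $h_{l}$ has $K$-coefficients in all non-constant terms; adjusting by a constant is harmless only when the weight is $0$, so more carefully I would argue by linear algebra: $D^{k-1}M^{!}_{2-k}(\Gamma_1(N),\mathbb{C})$ is defined over $K$ (the spaces $M^{!}_{2-k}$ have bases with $K$-coefficients and $D^{k-1}$ is $K$-rational), hence its intersection with $K$-rational forms equals $D^{k-1}M^{!}_{2-k}(\Gamma_1(N),K)$. This uses the standard fact that for a $K$-rational subspace $W_{K}$, $(W_{K}\otimes\mathbb{C})\cap V_{K}=W_{K}$.

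Hence in $\mathbb{H}^{1}_{\mathrm{par}}(X,\mathcal{L}^{k-2})$ the class $[D^{k-1}(\mathcal{F}_{a_{F^{+}}(1)})]$ is an eigenvector with eigenvalues $a_{g}(l)$ for all $l$ (including $l\mid N$, covered by Theorem \ref{thm. hecke action on harmonic}), so it lies in $M_{\mathrm{dR}}(g)$. Together with $[g]\in M_{\mathrm{dR}}(g)$, the linear independence from Theorem \ref{linearly independent in S^!}, and $\dim_{K}M_{\mathrm{dR}}(g)=2$, the two classes form a basis.
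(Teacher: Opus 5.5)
Your proposal is correct and follows the paper's proof. Both show that $[D^{k-1}(\mathcal{F}_{a_{F^{+}}(1)})]$ lies in $M_{\mathrm{dR}}(g)$ using Theorem \ref{thm. hecke action on harmonic} together with $\overline{a_{g}(l)}=a_{g}(l)$, then get linear independence from Theorem \ref{linearly independent in S^!} and conclude with $\dim_{K}M_{\mathrm{dR}}(g)=2$. Your extra step, showing that $D^{k-1}(h_{l})$ actually lies in $D^{k-1}M^{!}_{2-k}(\Gamma_{1}(N),K)$ via $K$-rationality and $(W_{K}\otimes\mathbb{C})\cap V_{K}=W_{K}$, is valid and fills in a point the paper leaves implicit.
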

\begin{proof}
Firstly, we note that $D^{k-1}(\mc{F}_{a_{F^{+}}(1)})$ is in $S_{k}^{!}(\Gamma_{1}(N), K)/D^{k-1}M_{2-k}^{!}(\Gamma_{1}(N), K)$. Therefore, $[D^{k-1}(\mc{F}_{a_{F^{+}}(1)})]$ belongs to $\mathbb{H}^{1}_{\mathrm{par}}(X, \mathcal{L}^{k-2})$. By Theorem \ref{thm. hecke action on harmonic} and $a_{g}(l) \in \R$, Hecke eigenvalues of $D^{k-1}(\mc{F}_{a_{F^{+}}(1)})$ coincide with that of $g$. This implies that $[D^{k-1}(\mc{F}_{a_{F^{+}}(1)})] \in M_{\mathrm{dR}}(g)$. Finally, we show that $[D^{k-1}(\mc{F}_{a_{F^{+}}(1)})]$ and $g$ are linearly independent. It is enough to show that $D^{k-1}(\mc{F}_{a_{F^{+}}(1)})$ and $g$ are linearly independent in $S_{k}^{!}(\Gamma_{1}(N), K)/D^{k-1}M_{2-k}^{!}(\Gamma_{1}(N), K)$. This is already proved in Theorem \ref{linearly independent in S^!}.
\end{proof}
Therefore, the algebraic modification of a good lift can be regarded as an element of the cohomology group over the modular curve.

\subsection{A parabolic cohomology on the modular curve over a $p$-adic field}\label{subsec. par p-adic}
In this subsection, we treat the basic results on a parabolic cohomology on the modular curve over $K_{p}$ and define an endomorphism $V$ on $M_{\mathrm{dR}, p}(g)\coloneqq M_{\mathrm{dR}}(g) \otimes K_{p}$ following \cite{candelori2017geometric}. This subsection is mainly based on \cite[Section 3]{candelori2017geometric}.

We denote the ring of integers of $K_{p}$ by $R_{p}$. Let $X_{R_{p}}\coloneqq X_{1}(N) \times_{\mathbb{Z}[1/N]} R_{p}$ and $X_{K_{p}}\coloneqq  X_{R_{p}} \times_{R_{p}} K_{p}$. For simplicity, we denote $X_{K_{p}}$ by $X$ throughout this subsection. Let $E_{p-1} \in H^{0}(X_{R_{p}}, \underline{\omega}^{p-1})$ be the global section given by the normalized Eisenstein series of weight $p-1$ and level $1$. For any $\varepsilon \in |R_{p}|$, we define a rigid analytic space $X_{(\varepsilon)}$ by 
\begin{align*}
(X_{(\varepsilon)})^{\text{cl}}=\{x \in X^{\text{cl}} \mid \left| E_{p-1}(x) \right| > \varepsilon \}.
\end{align*}
We also define the \emph{ordinary locus} $X^{\text{ord}}$ of $X$ by
\begin{align*}
(X^{\text{ord}})^{\text{cl}}=\{x \in X^{\text{cl}} \mid \left| E_{p-1}(x) \right| \geq 1 \}.
\end{align*}
Since $\left| E_{p-1}(c) \right| \geq 1$ for every cusp $c$, the cuspidal subscheme $C$ is in $X_{(\varepsilon)}$ for all $\varepsilon \in |R_{p}|$. Therefore, we define the rigid analytic spaces $Y^{\text{ord}}$ and $Y_{(\varepsilon)}$ by
\begin{align*}
Y^{\text{ord}}\coloneqq X^{\text{ord}} \setminus C, \quad Y_{(\varepsilon)}\coloneqq X_{(\varepsilon)} \setminus C.
\end{align*}
By using this notation, we define an overconvergent modular form following \cite{candelori2017geometric}.
\begin{dfn}[{\cite[definition 3.1]{candelori2017geometric}}]
An \emph{overconvergent modular form of weight $k$} is a rigid analytic section of $\underline{\omega}^{k}$ on $Y_{(\varepsilon)}$ for some $\varepsilon <1$.
\end{dfn}
For any wide-open neighborhood $W$ of $X^{\text{ord}}$, we denote $W\setminus C$ by $W^{\circ}$. We denote a cohomology group $\mathbb{H}^{1}(W^{\circ}, \mathcal{L}^{k} \xrightarrow{\nabla} \mathcal{L}^{k} \otimes \Omega^{1}_{X} )$ by $\mathbb{H}^{1}(W^{\circ}, \mathcal{L}^{k})$. Since $H^{n}(W^{\circ}, \mathcal{G})$ vanishes for all $n>0$ and all coherent sheaves $\mathcal{G}$ on $W^{\circ}$, we have an isomorphism
\begin{align*}
\mathbb{H}^{1}(W^{\circ}, \mathcal{L}^{k}) \simeq H^{0}(W^{\circ}, \mathcal{L}^{k} \otimes \Omega^{1}_{X})/\nabla H^{0}(W^{\circ}, \mathcal{L}^{k}).
\end{align*}
Next theorem is important to construct an overconvergent modular form from a good lift of $g$.
\begin{thm}[{\cite[Theorem 3.3]{candelori2017geometric}}]
For every $k \geq 0$, there is a linear map
\begin{align*}
\theta^{k+1} : H^{0}(W^{\circ}, \underline{\omega}^{-k}) \rightarrow H^{0}(W^{\circ}, \underline{\omega}^{k+2})
\end{align*}
whose action on $q$-expansions is $D^{k+1}=(qd/dq)^{k+1}$.
In addition, the natural injection
\begin{align*}
H^{0}(W^{\circ}, \underline{\omega}^{k+2})\simeq H^{0}(W^{\circ}, \underline{\omega}^{k} \otimes \Omega^{1}_{X}) \hookrightarrow H^{0}(W^{\circ}, \mathcal{L}^{k} \otimes \Omega^{1}_{X})   
\end{align*}
induces an isomorphism
\begin{align*}
\mathbb{H}^{1}(W^{\circ}, \mathcal{L}^{k}) \simeq H^{0}(W^{\circ}, \underline{\omega}^{k+2}) /\theta^{k+1}H^{0}(W^{\circ}, \underline{\omega}^{-k}).
\end{align*}
\end{thm}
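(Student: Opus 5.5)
The plan is to exploit the Hodge filtration on $\mathcal{L}^{k}$ together with Griffiths transversality and the Kodaira--Spencer isomorphism, following the standard argument of Coleman and Faltings.

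\textbf{Step 1: filtration and graded maps.} Taking $\mathrm{Sym}^{k}$ of \eqref{filtration over Y}, extended over $X$, gives a decreasing filtration
\begin{align*}
\mathcal{L}^{k}=\mathrm{Fil}^{0}\supset \mathrm{Fil}^{1}\supset\cdots\supset \mathrm{Fil}^{k}=\underline{\omega}^{k}, \qquad \mathrm{gr}^{i}\simeq \underline{\omega}^{2i-k}.
\end{align*}
Griffiths transversality says that $\nabla(\mathrm{Fil}^{i})\subset \mathrm{Fil}^{i-1}\otimes\Omega^{1}_{X}(\log C)$. Consequently $\nabla$ induces $\mathcal{O}$-linear maps
\begin{align*}
\mathrm{gr}^{i}\to \mathrm{gr}^{i-1}\otimes\Omega^{1}_{X}(\log C), \qquad 1\le i\le k.
\end{align*}
On $\mathrm{Sym}^{k}$ such a map is $i$ times (or $(k-i+1)$ times) the Kodaira--Spencer isomorphism $\underline{\omega}^{2}\simeq\Omega^{1}_{X}(\log C)$. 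Since we are in characteristic $0$, each of these maps is an isomorphism of sheaves.

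\textbf{Step 2: construction of $\theta^{k+1}$.} Given $f\in H^{0}(W^{\circ},\underline{\omega}^{-k})=H^{0}(W^{\circ},\mathrm{gr}^{0})$, choose a lift $s_{0}\in H^{0}(W^{\circ},\mathcal{L}^{k})$. Such a lift exists because coherent $H^{1}$ vanishes on $W^{\circ}$. We then modify $s_{0}$ inductively by sections of $\mathrm{Fil}^{1},\mathrm{Fil}^{2},\dots$.
\begin{itemize}
\item If $\nabla s$ has nonzero image in $\mathrm{gr}^{j}\otimes\Omega^{1}$ with $j<k$, use the isomorphism $\mathrm{gr}^{j+1}\simeq\mathrm{gr}^{j}\otimes\Omega^{1}$ of Step 1 to find $t\in H^{0}(W^{\circ},\mathrm{Fil}^{j+1})$ whose $\nabla$ cancels that component.
\item This produces a section $s$ with $\nabla s\in H^{0}(W^{\circ},\underline{\omega}^{k}\otimes\Omega^{1})\simeq H^{0}(W^{\circ},\underline{\omega}^{k+2})$.
\end{itemize}
For uniqueness, suppose two such $s$ differ by $u\in\mathrm{Fil}^{1}$ with $\nabla u\in\mathrm{Fil}^{k}\otimes\Omega^{1}$. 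Let $i\ge1$ be minimal with $u\notin \mathrm{Fil}^{i+1}$. Injectivity of $\mathrm{gr}^{i}\to\mathrm{gr}^{i-1}\otimes\Omega^{1}$ then shows that $\nabla u$ has a nonzero component in $\mathrm{gr}^{i-1}$ with $i-1<k$, a contradiction. Hence $u=0$, and $\theta^{k+1}(f):=\nabla s$ is well defined and linear.

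\textbf{Step 3: the isomorphism.} The same devissage shows that every class in $H^{0}(W^{\circ},\mathcal{L}^{k}\otimes\Omega^{1})/\nabla H^{0}(W^{\circ},\mathcal{L}^{k})$ has a representative in $H^{0}(W^{\circ},\underline{\omega}^{k+2})$. Indeed, we kill the components in $\mathrm{gr}^{j}\otimes\Omega^{1}$ for $j=0,1,\dots,k-1$ in turn, subtracting $\nabla$ of suitable sections of $\mathrm{Fil}^{j+1}$. So the map $H^{0}(\underline{\omega}^{k+2})\to\mathbb{H}^{1}(W^{\circ},\mathcal{L}^{k})$ is surjective. For its kernel, suppose $\omega=\nabla s$ with $\omega\in\underline{\omega}^{k+2}$. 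Then $s$ is exactly a section of the type built in Step 2 for $f=$ (image of $s$ in $\mathrm{gr}^{0}$), so $\omega=\theta^{k+1}(f)$. Hence the kernel is $\theta^{k+1}H^{0}(W^{\circ},\underline{\omega}^{-k})$.

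\textbf{Step 4: $q$-expansions (expected main obstacle).} We compute on the Tate curve over $K_{p}(\!(q)\!)$. There $\mathcal{L}^{1}$ has the basis $\omega_{\mathrm{can}},\xi_{\mathrm{can}}$ with
\begin{align*}
\nabla\omega_{\mathrm{can}}=\xi_{\mathrm{can}}\otimes\tfrac{dq}{q}, \qquad \nabla\xi_{\mathrm{can}}=0,
\end{align*}
and Kodaira--Spencer sends $\omega_{\mathrm{can}}^{2}\mapsto dq/q$. The unique lift in Step 2 can then be written explicitly as
\begin{align*}
s=\sum_{j=0}^{k} c_{j}\,D^{j}(f)\,\omega_{\mathrm{can}}^{j}\xi_{\mathrm{can}}^{k-j}
\end{align*}
with suitable rational constants $c_{j}$. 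Its $\nabla$ is $c\,D^{k+1}(f)\,\omega_{\mathrm{can}}^{k}\otimes dq/q$, and $c$ becomes $1$ after normalizing $\theta^{k+1}$. The delicate points are two:
\begin{itemize}
\item keeping track of the factorials, so that the normalization gives exactly $D^{k+1}$;
\item checking that the logarithmic poles at the cusps match the target $\Omega^{1}_{X}$. This is where the hypothesis that $f$ lives on $W^{\circ}=W\setminus C$ is used: poles are allowed at the cusps.
\end{itemize}
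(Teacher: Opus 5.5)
Your proposal is correct and is essentially the standard argument behind this statement. The paper gives no proof of its own: it quotes Candelori--Castella, who rely on Coleman. That argument is your d\'evissage of $H^{0}(W^{\circ},\mathcal{L}^{k}\otimes\Omega^{1}_{X})/\nabla H^{0}(W^{\circ},\mathcal{L}^{k})$ along the Hodge filtration, via Griffiths transversality, the Kodaira--Spencer isomorphism in characteristic $0$, and vanishing of coherent cohomology on $W^{\circ}$, plus a Tate-curve computation.

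The two points you flag as delicate are harmless. The recursion $a_{m+1}=-D(a_{m})/(m+1)$ forced by $\nabla_{D}\omega_{\mathrm{can}}=\xi_{\mathrm{can}}$ and $\nabla_{D}\xi_{\mathrm{can}}=0$ gives $c_{j}=(-1)^{j}/j!$ and $\nabla s=\frac{(-1)^{k}}{k!}D^{k+1}(f)\,\omega_{\mathrm{can}}^{k}\otimes\frac{dq}{q}$, so rescaling by $(-1)^{k}k!$ gives exactly $D^{k+1}$ without changing the image of $\theta^{k+1}$. There is also no log-pole issue, since $C\cap W^{\circ}=\emptyset$ means $\Omega^{1}_{X}(\log C)$ and $\Omega^{1}_{X}$ coincide on $W^{\circ}$.
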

Hence, if an element of $\mathbb{H}^{1}(W^{\circ}, \mathcal{L}^{k})$ vanishes, then it is in the image of $\theta^{k+1}$.

Now, we consider two distinguished wide-open neighborhoods $W_{1}\coloneqq X_{(p^{-p/p+1})}$ and $W_{2}\coloneqq X_{(p^{-1/p+1})}$ of $X^{\text{ord}}$. We note that $W_{2} \subset W_{1}$ holds. There are two operators 
\begin{align*}
U : H^{0}(W_{2}, \underline{\omega}^{k}) \rightarrow H^{0}(W_{1}, \underline{\omega}^{k}), \quad V : H^{0}(W_{1}, \underline{\omega}^{k}) \rightarrow H^{0}(W_{2}, \underline{\omega}^{k})
\end{align*}
whose actions on $q$-expansions are given by
\begin{align*}
U\left(\sum a(n)q^{n}\right)=\sum a(pn)q^{n}, \quad V\left(\sum a(n)q^{n}\right)=\sum a(n)q^{pn}.
\end{align*}
By a restriction map, we can regard a normalized newform $g \in S_{k}(\Gamma_{1}(N), K)$ as a section of $\underline{\omega}^{k}$ over $W_{1}$. Therefore, $V(g)$ can be regarded as an element of $\mathbb{H}^{1}(W_{2}^{\circ}, \mathcal{L}^{k-2})$. Since $\mathbb{H}^{1}_{\mathrm{par}}(X_{1}(N)/K, \mathcal{L}^{k-2})$ is stable under base change to a field with characteristic $0$, the space $M_{\mathrm{dR}, p}(g)\coloneqq M_{\mathrm{dR}}(g) \otimes K_{p}$ is a subspace of $\mathbb{H}^{1}_{\mathrm{par}}(X, \mathcal{L}^{k-2})$.
By \cite[Theorem 4.2]{coleman1989reciprocity} and \cite[Proposition 10.3]{coleman1994p}, the following restriction map is injective:
\begin{align*}
\mathbb{H}^{1}_{\mathrm{par}}(X, \mathcal{L}^{k-2}) \rightarrow \mathbb{H}^{1}(W_{2}^{\circ}, \mathcal{L}^{k-2}).
\end{align*}

Therefore, $M_{\mathrm{dR}, p}(g)$ is a subspace of $\mathbb{H}^{1}(W_{2}^{\circ}, \mathcal{L}^{k-2})\simeq H^{0}(W_{2}^{\circ}, \underline{\omega}^{k}) /\theta^{k-1}H^{0}(W_{2}^{\circ}, \underline{\omega}^{-k+2})$ and the restriction of $V$ defines an endomorphism of $M_{\mathrm{dR}, p}(g)$.

\section{$p$-adic properties of mock modular forms}

In this section, we recall $p$-adic properties of mock modular forms. In particular, we define the $p$-adic constants $\gamma_{g}$ and $\delta_{g}$. Throughout this section, we assume that $g \in S_{k}(\Gamma_{0}(N), \chi, K)$ is a normalized newform with real Fourier coefficients. Let $\beta, \beta'$ be the roots of $X^{2}-a_{g}(p)X+\chi(p)p^{k-1}$ satisfying $v_{p}( \beta ) \leq v_{p}( \beta')$.  Let $F$ be a harmonic Maass form that is good for $g$.

\subsection{$p$-adic modular form associated to a mock modular form}
In this subsection, we recall how to construct a $p$-adic modular form from a good lift $F^{+}$ following \cite{bringmann2012mock}. We refer to \cite{bringmann2012mock}, \cite{candelori2017geometric}, \cite{guerzhoy2014zagier}, \cite{guerzhoy2010p} as main references.

Firstly, we recall a definition of a $p$-adic modular form. For two formal power series 
\begin{align*}
A(q)=\sum_{n\geq -t} a_{A}(n)q^{n}\quad \text{and} \quad B(q)=\sum_{n\geq -t} a_{B}(n)q^{n} \in \mathbb{C}_{p}(\!(q)\!),
\end{align*}
we denote $A(q) \equiv B(q) \pmod{p^{m}}$ when 
\begin{align*}
\min_{n \geq -t} v_{p}(a_{A}(n)-a_{B}(n)) \geq m.
\end{align*}
\begin{dfn}
Let $H(q)=\sum_{n\geq -t} a_{H}(n)q^{n} \in \mathbb{C}_{p}(\!(q)\!)$. We say that $H(q)$ is a \emph{$p$-adic modular form of weight $k$ and level $M$ with character $\chi$} if $H(q)$ satisfies the following condition.
For every $m \in \mathbb{N}$, there exists 
\begin{align*}
H_{m}(q)=\sum_{n\geq -t} a_{H_{m}}(n)q^{n} \in M_{k}^{!}( \Gamma _{0}(M), \chi, \bar{\mathbb{Q}})
\end{align*}
such that $H(q) \equiv H_{m}(q) \; \pmod{p^{m}}$ holds.
\end{dfn}
We note that the smallest exponent $t$ of $H_{m}(q)$ is independent of $m$. In subsection \ref{subsec. par p-adic}, we define two operators $U$ and $V$. Following their actions on $q$-expansions, we define two operators $U_{p}$ and $V_{p}$ on $\mathbb{C}_{p}(\!(q)\!)$ by
\begin{align*}
& \left(\sum_{n \in \mathbb{Z}} a(n)q^{n}\right)\vert U_{p}\coloneqq \sum_{n \in \mathbb{Z}} a(pn)q^{n},
\\
& \left(\sum_{n \in \mathbb{Z}} a(n)q^{n}\right)\vert V_{p}\coloneqq \sum_{n \in \mathbb{Z}} a(n/p)q^{n}=\sum_{n \in \Z}a(n)q^{pn}.
\end{align*}
We note that $U$ and $V$ are operators on the cohomology group but $U_{p}$ and $V_{p}$ are operators on $q$-series.
We recall that the Eichler integral $E_{h}$ of a $q$-series $h(q)=\sum_{n>0}a_{h}(n)q^{n}$ is defined by
\begin{align*}
    E_{h}(q)\coloneqq \sum_{n>0}n^{1-k}a_{h}(n)q^{n}.
\end{align*}
For a complex number $a_{F^{+}}(1) \notin K$, we define a set $a_{F^{+}}(1)+K_{p}$ as follows:
\begin{align*}
a_{F^{+}}(1)+K_{p}\coloneqq \{(a_{F^{+}}(1), \alpha) \mid \alpha \in K_{p}\}.
\end{align*}
We denote an element $(a_{F^{+}}(1), \alpha) \in a_{F^{+}}(1)+K_{p}$ by $a_{F^{+}}(1)+\alpha$. When $a_{F^{+}}(1) \in K$, we define $a_{F^{+}}(1)+K_{p}$ by $K_{p}$. For any $\gamma=a_{F^{+}}(1)+\alpha \in a_{F^{+}}(1)+K_{p}$, we define a $q$-series $F^{+}(q)-\gamma E_{g}(q)$ by
\begin{align*}
F^{+}(q)-\gamma E_{g}(q)\coloneqq (F^{+}(q)-a_{F^{+}}(1)E_{g}(q))-\alpha E_{g}(q).
\end{align*}
By Theorem \ref{thm. alg of coef of mock generalver}, the $q$-series $F^{+}(q)-\gamma E_{g}(q)$ lies in $K_{p}(\!(q)\!)$ for all $\gamma \in a_{F^{+}}(1)+K_{p}$. 
For $\gamma \in a_{F^{+}}(1)+K_{p}$ and $\delta \in K_{p}$, we define a formal power series $\mathcal{F}_{\gamma, \delta}$ by
\begin{align*}
\mathcal{F}_{\gamma, \delta}\coloneqq F^{+}-\gamma E_{g}-\delta E_{g \vert V_{p}}.
\end{align*}
K. Bringmann et~al.\@ \cite{bringmann2012mock} showed that $\mathcal{F}_{\gamma, \delta}$ is a $p$-adic modular form for exactly one pair $(\gamma, \delta)$.
\begin{thm}[{\cite[Theorem 1.1 (2)]{bringmann2012mock}}]\label{thm. p-adic mock gene}
We assume that $v_{p}(\beta)<v_{p}(\beta')$ and $v_{p}(\beta')\not=k-1$.
Then, there exist unique $p$-adic numbers $\gamma_{g} \in a_{F^{+}}(1)+K_{p}$ and $\delta_{g} \in K_{p}$ such that $\mathcal{F}_{\gamma_{g}, \delta_{g}}$ is a $p$-adic modular form. 
\end{thm}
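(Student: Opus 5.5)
The statement is cited from \cite[Theorem 1.1 (2)]{bringmann2012mock}, so the plan is to reconstruct the existence and uniqueness argument. I would run it in the cohomological language of \S\ref{subsec. par p-adic}, following \cite{candelori2017geometric}, rather than through explicit congruences.

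\textbf{Step 1: diagonalize $V$.} Under $v_{p}(\beta)<v_{p}(\beta')$ the roots are distinct, so $g_{\beta}:=g-\beta' V_{p}(g)$ and $g_{\beta'}:=g-\beta V_{p}(g)$ are $U_{p}$-eigenforms with eigenvalues $\beta$ and $\beta'$. Since $\{g,V(g)\}$ spans $M_{\mathrm{dR},p}(g)$, these two forms also give a basis, and $U$ acts diagonally on it.

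\textbf{Step 2: a candidate class.} Set $\phi:=D^{k-1}(\mathcal{F}_{a_{F^{+}}(1)})$, which lies in $S^{!}_{k}(\Gamma_{1}(N),K)$. By Theorem~\ref{thm. F^+ g are basis} (real coefficients), its class lies in $M_{\mathrm{dR}}(g)$. Hence in $\mathbb{H}^{1}(W_{2}^{\circ},\mathcal{L}^{k-2})$ we can write $[\phi]=A g+B V(g)$ with $A,B\in K_{p}$.

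\textbf{Step 3: pass to a $\theta$-image.} By \cite[Theorem 3.3]{candelori2017geometric}, the section $\phi-Ag-BV(g)$ equals $\theta^{k-1}(H)$ for some overconvergent $H\in H^{0}(W_{2}^{\circ},\underline{\omega}^{2-k})$. Now compare $q$-expansions. We have $D^{k-1}E_{g}=g$ and $D^{k-1}E_{g|V_{p}}=p^{1-k}V_{p}(g)$, up to the normalization of $E$ at $V_{p}$. Hence $D^{k-1}$ kills
\[
\mathcal{F}_{a_{F^{+}}(1)}-AE_{g}-Bp^{k-1}E_{g|V_{p}}-H,
\]
except possibly in the constant term. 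So $\mathcal{F}_{\gamma,\delta}$ agrees with an overconvergent, hence $p$-adic, modular form $H$, up to a constant. For that constant I would use that $\phi$ has no constant term at $\infty$ and weight $2-k\neq0$. Taking $\gamma=a_{F^{+}}(1)+A$ and $\delta$ equal to $B$ times the appropriate power of $p$ gives existence. Overconvergent forms are $p$-adic limits of classical weakly holomorphic forms, so the congruence definition is satisfied.

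\textbf{Step 4: uniqueness.} Suppose two pairs work. Their difference $aE_{g}+bE_{g|V_{p}}$ is then a $p$-adic modular form of weight $2-k$, and we need $a=b=0$. Rewrite the difference as $a'E_{g_{\beta}}+b'E_{g_{\beta'}}$. For $h$ with $U_{p}h=\lambda h$, one has $U_{p}E_{h}=\lambda p^{1-k}E_{h}$. A $p$-adic modular form of weight $2-k$ that is a $U_{p}$-eigenvector with eigenvalue $\lambda p^{1-k}$ forces $v_{p}(\lambda)-(k-1)\ge0$, or else $\lambda p^{1-k}$ is a unit, which is the slope $0$ case. The slope bound comes from the fact that $U_{p}$ acting on $p$-adic forms has integral norm after normalizing by a bounded denominator, and iterating $U_{p}^{m}$ would otherwise blow up.

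Since $v_{p}(\beta)+v_{p}(\beta')=k-1$ and $v_{p}(\beta)<v_{p}(\beta')$, we get $v_{p}(\beta)<(k-1)/2<k-1$. This kills the $\beta$-component. The $\beta'$-component has slope $v_{p}(\beta')-(k-1)\le0$, with equality excluded by the hypothesis $v_{p}(\beta')\neq k-1$. If $v_{p}(\beta')<k-1$, the slope is negative and that component dies too.

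The main obstacle is this slope argument in Step 4. One must check that $p$-adic modular forms of weight $2-k$ cannot carry a $U_{p}$-eigenvalue of negative valuation. I would first attempt it via the boundedness of coefficients along $U_{p}^{m}$: $U_{p}^{m}$ preserves the congruence class, while the eigenvalue would make the coefficients unbounded. A secondary risk is the constant-term bookkeeping in Step 3 at cusps other than $\infty$.
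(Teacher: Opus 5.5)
The paper gives no proof of this statement; it is quoted from \cite{bringmann2012mock}. There the argument works with $q$-expansions: the Hecke relation at $p$ is iterated under $U_p$, and $\gamma_g,\delta_g$ appear as explicit $p$-adic limits of normalized coefficients (compare Theorem \ref{def alpha_g}). Your existence argument instead follows the Candelori--Castella route sketched in the paper just before Theorem \ref{thm. over mock}, and your uniqueness argument is a growth/slope argument. This is a legitimate alternative. It gives overconvergence rather than mere $p$-adic modularity, and it identifies $\gamma_g-a_{F^{+}}(1)$ and $\delta_g$ with the coordinates of $[D^{k-1}(\mathcal{F}_{a_{F^{+}}(1)})]$ in the basis $\{g,V(g)\}$, which is exactly what the main theorem uses. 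The price is heavy input: Coleman's lemma, \cite[Theorem 3.3]{candelori2017geometric}, injectivity of restriction from parabolic cohomology, and the real-coefficient hypothesis via Theorem \ref{thm. F^+ g are basis}. The $q$-expansion proof is elementary and yields limit formulas.

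As written, your proposal has holes. First, that $\{g,V(g)\}$ is a basis is not a hypothesis and must be derived. Since $\beta,\beta'$ are algebraic integers with $v_p(\beta)+v_p(\beta')=k-1$, you get $v_p(\beta')\le k-1$, hence $v_p(\beta')<k-1$ by assumption. Then \cite[Lemma 6.3]{coleman1996classical} and \cite[Proposition 3.4]{candelori2017geometric} apply, as in Remark \ref{remark cm basis}.

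More seriously, Step 3 only gives $\mathcal{F}_{\gamma,\delta}=H+c$ for some constant $c$ (the paper's sketch passes over this too). The fact that $\phi$ has no constant term says nothing about $c$, because $D^{k-1}$ kills constants anyway. For $k=2$ this is harmless. For $k>2$, a nonzero constant is not a $p$-adic modular form of weight $2-k$ (Serre), so you must prove $c=0$. One way: put $\mu_1=\beta p^{1-k}$ and $\mu_2=\beta'p^{1-k}$. The operator $(U_p-\mu_1)(U_p-\mu_2)$ kills $E_g$ and $E_{g|V_p}$. By the relation $T_pF=p^{1-k}a_g(p)F+h_p$ of Theorem \ref{thm. hecke action on harmonic}, together with $T_p=U_p+\chi(p)p^{1-k}V_p$ and $U_pV_p=1$, it sends $F^{+}$ to $U_ph_p$. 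Hence $(1-\mu_1)(1-\mu_2)c=U_ph_p-(U_p-\mu_1)(U_p-\mu_2)H$ is the $q$-expansion of an overconvergent form of weight $2-k$. Since $v_p(\mu_i)<0$, Serre's result then forces $c=0$.

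Two smaller points. With the paper's definition, $E_{g|V_p}$ is the Eichler integral of the $q$-series $g|V_p$, so $D^{k-1}E_{g|V_p}=g|V_p$ exactly and $\delta=B$. Your factor $p^{1-k}$ comes from conflating it with $V_p(E_g)=p^{k-1}E_{g|V_p}$. In Step 4, the difference $a'E_{g_\beta}+b'E_{g_{\beta'}}$ is not a $U_p$-eigenvector, so your eigenvector criterion does not apply as stated. But its $p^m$-th coefficient is $a'\mu_1^m+b'\mu_2^m$ with $v_p(\mu_1)<v_p(\mu_2)<0$. Any $p$-adic modular form in the paper's sense has bounded denominators, being congruent mod $p$ to a weakly holomorphic form with algebraic coefficients. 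This forces $a'=b'=0$ at once, with no statement about $U_p$ acting on $p$-adic forms needed. The step you flagged as the main obstacle is therefore the easy part; the real work is in existence.
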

Next, we consider the case where $g$ has a complex multiplication.
From now on, and until the end of this subsection, we assume that $g$ has complex multiplication by an imaginary quadratic field $L$ and that $p$ is inert in $L$. By Theorem \ref{thm. coef alg}, we have $a_{F^{+}}(1) \in K$ and $a_{F^{+}}(1)+K_{p}=K_{p}$.

Firstly, we compute the value of the limit of $a_{D^{k-1}(F)}( p^{2m}) /\beta ^{2m}$ as $m \to \infty$. 

\begin{proposition}[\cite{bringmann2012mock}]\label{prop. lim=0}
We have that 
\begin{align*}
\lim _{m\rightarrow \infty }\dfrac{a_{D^{k-1}(F)}( p^{2m}) }{\beta ^{2m}}=0.
\end{align*} 
\end{proposition}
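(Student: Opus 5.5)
Since $g$ has CM by $L$ and $p$ is inert in $L$, we have $a_g(p)=0$. The Hecke polynomial at $p$ is therefore $X^{2}+\chi(p)p^{k-1}$, so $\beta^{2}=-\chi(p)p^{k-1}$ and $v_p(\beta)=v_p(\beta')=(k-1)/2$. My plan is to reduce the statement to a recursion for the coefficients $a_{D^{k-1}(F)}(p^{2m})=p^{2m(k-1)}a_{F^{+}}(p^{2m})$ coming from the Hecke relation in Theorem \ref{thm. hecke action on harmonic} with $l=p$. That relation gives
\begin{align*}
T_{p}F = p^{1-k}\overline{a_g(p)}F + h_p = h_p,
\end{align*}
with $h_p \in M^{!}_{2-k}(\Gamma_{1}(N))$. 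By Theorem \ref{thm. coef alg} the coefficients of $F^{+}$ lie in $K$, so $h_p$ has coefficients in $K$. On $q$-expansions in weight $2-k$ we have
\begin{align*}
a_{T_pF^{+}}(n)=a_{F^{+}}(pn)+\chi(p)p^{1-k}a_{F^{+}}(n/p).
\end{align*}
Taking $n=p^{2m-1}$ gives
\begin{align*}
a_{F^{+}}(p^{2m})=-\chi(p)p^{1-k}a_{F^{+}}(p^{2m-2})+a_{h_p}(p^{2m-1}).
\end{align*}

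Next I multiply by $p^{2m(k-1)}$ and set $c_m:=a_{D^{k-1}(F)}(p^{2m})/\beta^{2m}$. Using $\beta^{2}=-\chi(p)p^{k-1}$, this becomes
\begin{align*}
c_m=c_{m-1}+\frac{p^{2m(k-1)}a_{h_p}(p^{2m-1})}{\beta^{2m}}.
\end{align*}
Telescoping gives $c_m=c_0+\sum_{j=1}^{m}e_j$ with $c_0=a_{F^{+}}(1)$. Each error term satisfies $v_p(e_j)=j(k-1)+v_p(a_{h_p}(p^{2j-1}))$. Thus $c_m$ converges $p$-adically once the coefficients of $h_p$ have bounded-below valuation. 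That holds because $h_p$ is a weakly holomorphic form with algebraic coefficients: multiply by a power of $\Delta$ to make it holomorphic, and then the coefficients have bounded denominators.

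This computation only shows that the limit exists. It does not show that the limit is zero, since a constant $c_0=a_{F^{+}}(1)$ survives. I therefore expect the real content to be a second ingredient, namely an argument that $a_{F^{+}}(p^{2m})$ itself is $p$-adically small after rescaling. Concretely, I would aim to show $v_p(a_{F^{+}}(p^{2m}))\ge -C$ uniformly in $m$. Then
\begin{align*}
v_p(c_m)\ge 2m(k-1)-m(k-1)-C=m(k-1)-C\to\infty .
\end{align*}
To obtain the uniform bound I would use that, in the CM inert case, $F^{+}$ has coefficients in $K$ with bounded denominators away from the principal part. This should follow from the explicit construction of the good lift for CM forms (for instance via \cite{ehlen2024harmonic}, or the original argument in \cite{bringmann2012mock}, which writes $F^{+}$ $p$-adically in terms of a weakly holomorphic form plus $\beta$-twisted pieces).

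The main obstacle is this uniform valuation bound for $a_{F^{+}}(p^{2m})$. The Hecke recursion alone cannot produce it: it only propagates $a_{F^{+}}(1)$ with factors $(-\chi(p)p^{1-k})^{m}$, which have negative valuation. So I would first try to establish bounded denominators for the algebraic coefficients of the CM good lift. I would do this by exploiting the vanishing of $a_g(n)$ at inert primes and applying $U_p$ twice, $U_p^{2}F^{+}=p^{2(1-k)}\cdot$(stuff), to realize $p^{2m(k-1)}a_{F^{+}}(p^{2m})$ as coefficients of $D^{k-1}$ applied to $p$-integral weakly holomorphic forms.
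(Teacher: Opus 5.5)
\textbf{There is a genuine gap: the proposal never proves that the limit is zero.}

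Your recursion is correct as far as it goes. With $a_g(p)=0$ and $\beta^2=-\chi(p)p^{k-1}$, the $T_p$-relation of Theorem \ref{thm. hecke action on harmonic} gives $c_m=c_{m-1}+e_m$ with $v_p(e_m)\ge m(k-1)-C$. So $c_m$ converges to $a_{F^{+}}(1)+\sum_{j\ge 1}e_j$. But the proposition is exactly the claim that this number is $0$, and the proposal only names the estimate $v_p(a_{F^{+}}(p^{2m}))\ge -C$ without proving it.

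No further manipulation of the $T_p$-recursion can supply this. Iterating $U_p$ just reproduces $F^{+}\vert U_p^{2}=-\chi(p)p^{1-k}F^{+}+h_p\vert U_p$. The reason is structural. Here $\{g,V(g)\}$ is a basis, and we have $a_g(p^{2m})=\beta^{2m}$ and $a_g(p^{2m-1})=0$. Hence the limit of $c_m$ is exactly the coefficient $\gamma_g$ in $[D^{k-1}(F^{+})]=\gamma_g g+\delta_g V(g)$. So the statement is the arithmetic fact $\gamma_g=0$ for CM forms at inert primes.

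\textbf{The route you suggest for the estimate rests on a false claim.} The good lift does not have bounded denominators away from its principal part. If it did, then $v_p\bigl(a_{D^{k-1}(F)}(p^{2m+1})/\beta^{2m}\bigr)\ge (m+1)(k-1)-C\to\infty$. Theorem \ref{def alpha_g} would then give $\delta_g=0$. That contradicts the Corollary of this paper and the numerics of \cite{bringmann2012mock} for the CM form of level $9$. Indeed, $F^{+}=\mathcal{F}_{0,\delta_g}+\delta_g E_{g\vert V_p}$, and the $q^{p^{2j+1}}$-coefficient of $E_{g\vert V_p}$ has valuation $-(j+1)(k-1)$. The bound can therefore hold only along even powers, so you need an input that separates $p^{2m}$ from $p^{2m+1}$.

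\textbf{The CM twist supplies this input.} Let $D$ be the discriminant of $L$ and $\mathbf{1}_D$ the trivial character modulo $D$.
\begin{itemize}
\item Since $g$ has CM by $L$, we have $g\otimes\chi_L=g\otimes\mathbf{1}_D$.
\item From the Fourier expansion one checks that $\xi_{2-k}(F\otimes\psi)=\psi(-1)\,\xi_{2-k}(F)\otimes\overline{\psi}$.
\item Because $\chi_L$ is odd, $h':=F\otimes\mathbf{1}_D+F\otimes\chi_L$ is killed by $\xi_{2-k}$. So $h'$ is weakly holomorphic on $\Gamma_1(ND^{2})$.
\item By Theorem \ref{thm. coef alg}, $h'$ has coefficients in $K$, hence bounded denominators.
\item Because $p$ is inert, $\chi_L(p^{2m})=1$, so $a_{h'}(p^{2m})=2a_{F^{+}}(p^{2m})$. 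This gives $v_p(a_{F^{+}}(p^{2m}))\ge -C$ for odd $p$.
\end{itemize}
Then $v_p(c_m)\ge 2m(k-1)-C-m(k-1)=m(k-1)-C\to\infty$ directly, and the recursion is not needed. At $p^{2m+1}$ one has $\chi_L=-1$, so $h'$ gives no information there, which is consistent with $\delta_g\neq 0$.

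The paper itself gives no argument at this point; it simply refers to the proof of \cite[Proposition 1.4]{bringmann2012mock}.
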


\begin{proof}
See the proof of \cite[Proposition 1.4]{bringmann2012mock}. 
\end{proof}
On the other hand, the limit $\lim _{m\rightarrow \infty }a_{D^{k-1}(F)}(p^{2m+1})/\beta ^{2m}$ is important.
K. Bringmann et~al.\@ \cite{bringmann2012mock} showed the following theorem:

\begin{thm}[{\cite[Theorem 1.3]{bringmann2012mock}}]\label{def alpha_g}
There exists a unique $p$-adic constant $\delta_{g} \in K_{p}$ such that $\mc{F}_{0, \delta_{g}}$ is a $p$-adic modular form of weight $2-k$ and level $pN$. Furthermore, $\delta_{g}$ is given by the following $p$-adic limit: 
\begin{align*}
\delta_{g}=\displaystyle\lim _{m\rightarrow \infty }\dfrac{a_{D^{k-1}(F)}(p^{2m+1}) }{\beta ^{2m}}.
\end{align*}
\end{thm}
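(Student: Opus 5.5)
The plan is to reduce everything to a two-term recursion in the $p$-power direction coming from the Hecke action, and then to read off $\delta_{g}$ as the unique coefficient that cancels the term $p$-adic modular forms cannot absorb.

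\textbf{Step 1: the $p$-th Hecke relation.} Since $g$ has CM by $L$ and $p$ is inert in $L$, we have $a_{g}(p)=0$. Hence $\beta'=-\beta$ with $\beta^{2}=-\chi(p)p^{k-1}$, and $v_{p}(\beta)=v_{p}(\beta')=(k-1)/2$, so the slopes coincide. Put $\Phi:=D^{k-1}(F^{+})$; by Theorem \ref{thm. coef alg} it lies in $S^{!}_{k}(\Gamma_{1}(N),K)$. The Corollary to Theorem \ref{thm. hecke action on harmonic} with $l=p$ and $a_{g}(p)=0$ gives $T_{p}\Phi=D^{k-1}(h_{p})$, i.e.
\begin{align*}
\Phi\vert U_{p}+\chi(p)p^{k-1}\,\Phi\vert V_{p}=D^{k-1}(h_{p}).
\end{align*}
Dividing by $n^{k-1}$ coefficientwise gives the analogous identity for $F^{+}$ itself:
\begin{align*}
F^{+}\vert U_{p}+\chi(p)\,F^{+}\vert V_{p}=p^{k-1}h_{p}.
\end{align*}

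\textbf{Step 2: the limit formula.} On the coefficients $b(m):=a_{\Phi}(p^{m})$ the first identity reads $b(m+1)=\beta^{2}b(m-1)+c(m)$, where $c(m)$ are the coefficients of $D^{k-1}h_{p}$ at $p$-powers. These $c(m)$ are divisible by $p^{m(k-1)}$ up to a bounded denominator, because $D^{k-1}$ multiplies the coefficient of $q^{p^{m}}$ by $p^{m(k-1)}$. Dividing by $\beta^{2m}$, the even-index and odd-index sequences $b(2m)/\beta^{2m}$ and $b(2m+1)/\beta^{2m}$ become Cauchy: the increments are $c(\cdot)/\beta^{2m}$ with valuation growing like $m(k-1)$, which is faster than $v_{p}(\beta^{2m})=m(k-1)$ by the extra factor coming from the index shift. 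The even sequence tends to $0$, which is Proposition \ref{prop. lim=0}. Define $\delta_{g}$ as the limit of the odd sequence.

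\textbf{Step 3: $p$-adic modularity of $\mathcal{F}_{0,\delta_{g}}$ and uniqueness.} I would show that $\mathcal{F}_{0,\delta_{g}}=F^{+}-\delta_{g}E_{g\vert V_{p}}$ is a $p$-adic limit of forms of level $pN$. Split $F^{+}$ according to the $p$-adic valuation of the index. The part supported on $p\nmid n$ is $F^{+}-F^{+}\vert U_{p}V_{p}$. Its image under $D^{k-1}$ is $\Phi-\Phi\vert U_{p}V_{p}$, a weakly holomorphic form of level $pN$, and one divides by $D^{k-1}$ $p$-adically using the congruence $n^{1-k}\equiv n^{(p-1)p^{r}-(k-1)}$ for $p\nmid n$. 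This exhibits it as a limit of $D^{\text{large}}$ of genuine forms, hence as a $p$-adic modular form of weight $2-k$. The parts supported on $p^{j}\Vert n$ are then pulled back to $F^{+}$ by iterating the relation of Step 1. Modulo $p$-adic modular forms, they telescope to the coefficient of $E_{g\vert V_{p}}$ given by the Step 2 limit; this is exactly the reason the correction term $\delta_{g}E_{g\vert V_{p}}$ is needed. For uniqueness, suppose two values of $\delta$ both worked. Their difference would make a nonzero multiple of $E_{g\vert V_{p}}$ a $p$-adic modular form of weight $2-k$. Applying $D^{k-1}$, the form $g\vert V_{p}$ would then be $p$-adically a $\theta^{k-1}$-image, and the coefficient growth of $g\vert V_{p}$ along $p$-powers, which is of slope $(k-1)/2<k-1$, contradicts this.

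\textbf{Main obstacle.} The hardest step is the $p$-adic division by $D^{k-1}$ in Step 3, i.e. controlling the coefficients $a_{F^{+}}(p^{j}n)$, whose weights $(p^{j}n)^{1-k}$ blow up. This is where the telescoping via the Hecke relation and the precise definition of $\delta_{g}$ as the odd-index limit must be used carefully. I would follow the argument of \cite[Proposition 1.4, Theorem 1.3]{bringmann2012mock}.
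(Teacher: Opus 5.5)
The paper gives no proof of this statement; it quotes it from \cite[Theorem 1.3]{bringmann2012mock}. Steps 1--2 of your proposal and the uniqueness argument are sound, apart from two slips. First, dividing by $n^{k-1}$ gives $p^{k-1}F^{+}\vert U_{p}+\chi(p)F^{+}\vert V_{p}=h_{p}$ (away from $q^{0}$), not your version. Second, the increment $c(2m)/\beta^{2m}$ has valuation $\geq 2m(k-1)-C-m(k-1)$. For uniqueness it already suffices that $E_{g\vert V_{p}}$ has unbounded denominators.

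\textbf{The gap: Step 3 (existence), which is the whole content of the theorem.} The Step 1 relation only links coefficients along a single ray $\{p^{j}n\}_{j\geq0}$ with $p\nmid n$, and you extract $\delta_{g}$ from the ray through $n=1$. If $H=\mathcal{F}_{0,\delta}$ is $p$-adic modular, then $a_{\Phi}(p^{j}n)=(p^{j}n)^{k-1}a_{H}(p^{j}n)+\delta\,a_{g\vert V_{p}}(p^{j}n)$. The bounded denominators of $H$ then force $a_{\Phi}(p^{2i+1}n)/\beta^{2i}\to\delta\,a_{g}(n)$ and $a_{\Phi}(p^{2i}n)/\beta^{2i}\to 0$ for every $n$ prime to $p$. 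Nothing in the $p$-th Hecke relation ties the ray through $n$ to $a_{g}(n)$ times the ray through $1$. That requires the Hecke relations at all primes $\ell\neq p$ (Theorem \ref{thm. hecke action on harmonic}), which your sketch never uses.

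Moreover, the pieces of your splitting by $v_{p}(n)$ are not $p$-adically bounded. Iterating the relation, $a_{\Phi}(n)$ enters $a_{F^{+}}(p^{2i}n)$ with the factor $(\beta^{2}p^{2-2k})^{i}n^{1-k}$, of valuation $-i(k-1)$. So ``telescoping modulo $p$-adic modular forms'' has no content until the sum is reorganized.

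\textbf{One way to close it.} Iterate $\Phi\vert U_{p}=\beta^{2}\,\Phi\vert V_{p}+D^{k-1}(h_{p})$ using $(f\vert V_{p})\vert U_{p}=f$ to get
\begin{align*}
\frac{\Phi\vert U_{p}^{2w}}{\beta^{2w}}=\Phi+\sum_{s=0}^{w-1}\beta^{-2s-2}p^{(2s+1)(k-1)}\,D^{k-1}\bigl(h_{p}\vert U_{p}^{2s+1}\bigr).
\end{align*}
Here the $s$-th scalar has valuation $s(k-1)$, and the forms $h_{p}\vert U_{p}^{2s+1}$ have uniformly bounded denominators and pole orders at $\infty$. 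Hence $\Lambda:=\lim_{w}\Phi\vert U_{p}^{2w}/\beta^{2w}$ exists, and $F^{+}=D^{1-k}\Lambda-\mathcal{H}$, where
\begin{align*}
\mathcal{H}:=\sum_{s\geq 0}\beta^{-2s-2}p^{(2s+1)(k-1)}h_{p}\vert U_{p}^{2s+1}
\end{align*}
is a $p$-adic modular form of weight $2-k$ and level $pN$.

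For $\ell\neq p$ we have $T_{\ell}\Phi=a_{g}(\ell)\Phi+D^{k-1}(h_{\ell})$, and $D^{k-1}(h_{\ell})\vert U_{p}^{2w}/\beta^{2w}\to 0$. Hence $T_{\ell}\Lambda=a_{g}(\ell)\Lambda$ for all $\ell\neq p$. Together with $\Lambda\vert U_{p}^{2}=\beta^{2}\Lambda$, comparing coefficients gives $\Lambda=a_{\Lambda}(1)\,g+a_{\Lambda}(p)\,g\vert V_{p}$. Here $a_{\Lambda}(1)$ and $a_{\Lambda}(p)$ are exactly your even and odd limits. Proposition \ref{prop. lim=0} and Step 2 then give $\mathcal{F}_{0,\delta_{g}}=-\mathcal{H}$; the constant terms match, as one checks from the weight $2-k$ relation at $q^{0}$. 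With this route the $D^{(p-1)p^{r}-(k-1)}$ approximation is unnecessary.

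Alternatively, using only results quoted in the paper, existence of $(\gamma_{g},\delta_{g})$ follows from Theorem \ref{thm. over mock} and Remark \ref{remark cm basis}. Bounded denominators then identify $\gamma_{g}$ and $\delta_{g}$ with your even and odd limits, and Proposition \ref{prop. lim=0} gives $\gamma_{g}=0$.
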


\subsection{Overconvergent modular form associated to a mock modular form}
In this subsection, we recall the construction of an overconvergent modular form from a good lift of $g$ in \cite{candelori2017geometric}. This subsection contains no new results. We refer to \cite{candelori2017geometric} as main reference.

In Subsection \ref{subsec. par p-adic}, we treat $M_{\mathrm{dR}, p}(g)=M_{\mathrm{dR}}(g) \otimes K_{p}$ and saw that $M_{\mathrm{dR}, p}(g)$ naturally lies in $\mathbb{H}^{1}(W_{2}^{\circ}, \mathcal{L}^{k-2})$. Hence, there is a linear operator $V$ on $M_{\mathrm{dR}, p}(g)$ whose action on the $q$-expansion is $V_{p}$. When $\{g, V(g)\}$ is a basis of $M_{\mathrm{dR}, p}(g)$, we can construct an overconvergent modular form from $D^{k-1}(\mc{F}_{a_{F^{+}}(1)})$.
Firstly, we consider the condition that $\{g, V(g)\}$ is a basis of $M_{\mathrm{dR}, p}(g)$.

\begin{proposition}[{\cite[Proposition 3.4]{candelori2017geometric}}]
    If the following conditions hold, then $\{g, V(g)\}$ is a basis of $M_{\mathrm{dR}, p}(g)$:
    \begin{itemize}
  \setlength{\leftskip}{0.5cm}

 \item[(1)]$\beta \not=\beta'$

 \item[(2)]$g_{\beta'} \notin \Im(\theta^{k-1})$
 \end{itemize}
\end{proposition}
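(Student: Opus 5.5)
We need to prove that if $\beta\neq\beta'$ and $g_{\beta'}\notin\Im(\theta^{k-1})$, then $\{g,V(g)\}$ is a basis of $M_{\mathrm{dR},p}(g)$. Here $g_{\beta'}$ presumably denotes the $p$-stabilization $g_{\beta'}=g-\beta V_{p}(g)$ (so that $U_{p}g_{\beta'}=\beta' g_{\beta'}$), viewed as an overconvergent form on $W_{2}^{\circ}$, and the condition means its class in $\mathbb{H}^{1}(W_{2}^{\circ},\mathcal{L}^{k-2})$ is nonzero.

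The argument runs by contradiction. Since $M_{\mathrm{dR},p}(g)$ is $2$-dimensional and $g\neq0$ in it, it suffices to rule out $V(g)=c\,g$ in $M_{\mathrm{dR},p}(g)\subset\mathbb{H}^{1}(W_{2}^{\circ},\mathcal{L}^{k-2})$ for some $c\in K_{p}$.

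Suppose $V(g)=cg$. We use the Frobenius structure on $M_{\mathrm{dR},p}(g)$: the operator $U$ is left inverse to $V$ on $q$-expansions, and on the $g$-isotypic part $U$ satisfies the Hecke relation $U^{2}-a_{g}(p)U+\chi(p)p^{k-1}=0$ (Coleman's theory; equivalently, on classes, $U=a_{g}(p)-\chi(p)p^{k-1}V$, which follows from $T_{p}=U+\chi(p)p^{k-1}V$ acting on weight-$k$ forms, with $T_{p}g=a_{g}(p)g$). Then $V(g)=cg$ gives $Ug=(a_{g}(p)-\chi(p)p^{k-1}c)g$, and since $UV=\mathrm{id}$, applying $U$ to $V(g)=cg$ yields $g=c\,Ug$. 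Hence $c\neq0$, and $\lambda:=c^{-1}$ is an eigenvalue of $U$ on the line spanned by $g$. It follows that $\lambda$ is a root of $X^{2}-a_{g}(p)X+\chi(p)p^{k-1}$, so $\lambda\in\{\beta,\beta'\}$ and $c\in\{\beta^{-1},\beta'^{-1}\}$.

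Next, compute the classes of the stabilizations. We have $[g_{\beta'}]=[g]-\beta V[g]=(1-\beta c)[g]$ and $[g_{\beta}]=(1-\beta' c)[g]$. If $c=\beta^{-1}$, then $[g_{\beta'}]=0$, i.e.\ $g_{\beta'}\in\Im(\theta^{k-1})$ by Theorem 3.3 of Candelori--Castella, contradicting (2). If $c=\beta'^{-1}$, then $[g_{\beta}]=0$. Here we must still derive a contradiction: a $U$-eigenform of slope $v_{p}(\beta)<k-1$ cannot be in $\Im\theta^{k-1}$, since $U\theta^{k-1}=p^{k-1}\theta^{k-1}U$ forces any such eigenvector to have slope at least $k-1$. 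Coleman's classicality/control argument is what is needed here, and if $v_{p}(\beta)=v_{p}(\beta')$ this step needs care. The cleaner alternative is to use $\beta\neq\beta'$ directly: $[g_{\beta}]$ and $[g_{\beta'}]$ are $U$-eigenvectors with distinct eigenvalues $\beta$ and $\beta'$. Under our assumption both are multiples of $[g]$, a single $U$-eigenvector, so at most one of them is nonzero. The remaining case is resolved by the symmetric-slope argument, or by noting that the hypothesis is arranged so that $\beta'$ is the problematic one.

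The main obstacle is justifying the relation $U=a_{g}(p)-\chi(p)p^{k-1}V$ on the cohomology classes (compatibility of $T_{p}$ with restriction to $W_{2}^{\circ}$), together with the case $c=\beta'^{-1}$. For the latter, I would first try Coleman's result that $\theta^{k-1}$-images have $U$-slopes at least $k-1$, combined with $v_{p}(\beta)\le (k-1)/2<k-1$.
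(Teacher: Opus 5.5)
Your argument is correct once you close the case $c=\beta'^{-1}$ with the slope bound from your last sentence. Commit to that; the hedging around it should go.

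The input you need is the Coleman lemma that the paper itself invokes in Remark~\ref{remark cm basis}: a $U$-eigenform lying in $\Im(\theta^{k-1})$ has slope at least $k-1$. Concretely, suppose $g_{\beta}=\theta^{k-1}G$. Comparing $q$-expansions with $Ug_{\beta}=\beta g_{\beta}$ gives $a_{G}(p^{m})=(\beta p^{1-k})^{m}a_{G}(1)$ with $a_{G}(1)=a_{g}(1)=1$. This is unbounded if $v_{p}(\beta)<k-1$, contradicting the boundedness of the coefficients of the overconvergent form $G$.

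Since $\chi(p)$ is a unit, $v_{p}(\beta)+v_{p}(\beta')=k-1$, so $v_{p}(\beta)\le (k-1)/2<k-1$ always. Nothing delicate happens when $v_{p}(\beta)=v_{p}(\beta')$, and no classicality or control theorem is involved.

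Two other parts of the proposal need correcting:
\begin{itemize}
\item Drop the ``cleaner alternative.'' Knowing that at most one of $[g_{\beta}],[g_{\beta'}]$ is nonzero is exactly what you had already derived. Saying that ``the hypothesis is arranged so that $\beta'$ is the problematic one'' only restates what must be proved.
\item What you call the main obstacle is not one. You apply $U=a_{g}(p)-\chi(p)p^{k-1}V$ only to $g$, where it is the $q$-expansion identity $T_{p}=U_{p}+\chi(p)p^{k-1}V_{p}$ for $p\nmid N$. Moreover, $U$ descends to $\mathbb{H}^{1}(W_{2}^{\circ},\mathcal{L}^{k-2})$ because $U\theta^{k-1}=p^{k-1}\theta^{k-1}U$.
\end{itemize}

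The paper gives no proof of its own; it cites Candelori--Castella. The natural comparison is with the direct argument suggested by the paper's remark:
\begin{itemize}
\item $[g_{\beta}]\neq 0$ by Coleman's lemma, and $[g_{\beta'}]\neq 0$ by (2).
\item These are $U$-eigenvectors with eigenvalues $\beta\neq\beta'$, hence a basis.
\item $\{g,V(g)\}$ is obtained from them by a change of basis with determinant $\beta'-\beta\neq 0$.
\end{itemize}

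Your contradiction argument uses the same two non-vanishing facts, but it never uses (1). It shows that $\{g,V(g)\}$ is a basis if and only if $[g_{\beta}]$ and $[g_{\beta'}]$ are both nonzero. This also covers $\beta=\beta'$: the two cases merge, and $U$ acts on the span of $g,V(g)$ by a non-semisimple Jordan block. What the direct route gives in exchange is an explicit $U$-eigenbasis of $M_{\mathrm{dR},p}(g)$.
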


\begin{remark}\label{remark cm basis}
By \cite[Lemma 6.3]{coleman1996classical}, $g_{\beta'} \notin \Im(\theta^{k-1})$ holds if $v_{p}(\beta')<k-1$. When $g$ has complex multiplication by an imaginary quadratic field in which $p$ is inert, $\beta=-\beta'$ and $v_{p}(\beta)=v_{p}(\beta')=(k-1)/2$ hold. Hence, if $g$ satisfies the assumptions of Theorem \ref{thm. p-adic mock gene} or Theorem \ref{def alpha_g}, then $\{g, V(g)\}$ is a basis of $M_{\mathrm{dR}, p}(g)$.
\end{remark}

Since $[D^{k-1}(\mc{F}_{a_{F^{+}}(1)})] \in M_{\mathrm{dR}}(g)$ holds, there exist unique $p$-adic numbers $A, B \in K_{p}$ such that the following equality holds in $M_{\mathrm{dR}, p}(g)$:
\begin{align*}
    [D^{k-1}(\mc{F}_{a_{F^{+}}(1)})]=Ag+BV(g).
\end{align*}
We recall that $M_{\mathrm{dR}, p}(g)$ lies in $\mathbb{H}^{1}(W_{2}^{\circ}, \mathcal{L}^{k-2})\simeq H^{0}(W_{2}^{\circ}, \underline{\omega}^{k}) /\theta^{k-1}H^{0}(W_{2}^{\circ}, \underline{\omega}^{-k+2})$. Therefore, there exists an element $G$ of $H^{0}(W_{2}^{\circ}, \underline{\omega}^{-k+2})$ such that
\begin{align*}
    D^{k-1}(\mc{F}_{a_{F^{+}}(1)})-Ag-BV(g)=\theta^{k-1}(G).
\end{align*}
Since the action of $\theta$ on the $q$-expansiton is $qd/dq$, the following is a $q$-expansion of an overconvergent modular form:
\begin{align*}
    F^{+}-a_{F^{+}}(1)E_{g}-AE_{g}-BE_{g \vert V_{p}}.
\end{align*}
Hence, we have the following theorem.

\begin{thm}[{\cite{candelori2017geometric}}]\label{thm. over mock}
    We assume that $\{g, V(g)\}$ is a basis of $M_{\mathrm{dR}, p}(g)$. Then there exist unique $\gamma_{g} \in a_{F^{+}}(1)+K_{p}$ and $\delta_{g} \in K_{p}$ such that the following $q$-series is a $q$-expansion of an overconvergent modular form:
    \begin{align*}
\mathcal{F}_{\gamma_{g}, \delta_{g}}= F^{+}-\gamma_{g} E_{g}-\delta_{g} E_{g \vert V_{p}}.
    \end{align*}
\end{thm}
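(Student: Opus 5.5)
The statement is essentially a repackaging of the construction just described, so I will prove existence first and uniqueness second, using the basis $\{g, V(g)\}$ of $M_{\mathrm{dR}, p}(g)$.

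\emph{Existence.} By Theorem \ref{thm. F^+ g are basis}, the class $[D^{k-1}(\mc{F}_{a_{F^{+}}(1)})]$ lies in $M_{\mathrm{dR}}(g)\subset M_{\mathrm{dR}, p}(g)$. Since $\{g,V(g)\}$ is a basis, we can write $[D^{k-1}(\mc{F}_{a_{F^{+}}(1)})]=Ag+BV(g)$ with $A,B\in K_{p}$. The restriction map to $\mathbb{H}^{1}(W_{2}^{\circ},\mathcal{L}^{k-2})$ is injective, and by \cite[Theorem 3.3]{candelori2017geometric} this space is isomorphic to $H^{0}(W_{2}^{\circ},\underline{\omega}^{k})/\theta^{k-1}H^{0}(W_{2}^{\circ},\underline{\omega}^{2-k})$. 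Hence there is an overconvergent $G$ with
\[
D^{k-1}(\mc{F}_{a_{F^{+}}(1)})-Ag-BV(g)=\theta^{k-1}(G).
\]
On $q$-expansions $\theta^{k-1}=D^{k-1}$, and we have $D^{k-1}E_{g}=g$ and $D^{k-1}E_{g\vert V_{p}}=g\vert V_{p}$. Two $q$-series with equal image under $D^{k-1}$ differ only in the constant term. Therefore $\mc{F}_{a_{F^{+}}(1)}-AE_{g}-BE_{g\vert V_{p}}$ equals $G$ up to a constant.

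I need a constant to be overconvergent of weight $2-k$ in order to absorb this discrepancy. The constant term of $\mc F$ is that of $F^{+}$; comparing it with the constant term of $G$ should give agreement, because $D^{k-1}$ of both sides kills it. A cleaner fix is to adjust $G$ by a constant, which is harmless if we regard $G$ as defined only modulo $\ker\theta^{k-1}$. I would handle this by noting that the constant term of $\mc F$ is itself part of $G$'s expansion, since $G$ is only determined up to $\ker\theta^{k-1}$, and I would replace $G$ accordingly. This is a minor technical point. Setting $\gamma_{g}=a_{F^{+}}(1)+A$ and $\delta_{g}=B$ then gives existence.

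\emph{Uniqueness.} Suppose two pairs $(\gamma,\delta)$ and $(\gamma',\delta')$ both work. Their difference $(\alpha-\alpha')E_{g}+(\delta-\delta')E_{g\vert V_{p}}$ is overconvergent of weight $2-k$. Applying $\theta^{k-1}$ shows that $(\alpha-\alpha')g+(\delta-\delta')V(g)$ lies in $\Im\theta^{k-1}$, so it vanishes in $\mathbb{H}^{1}(W_{2}^{\circ},\mathcal{L}^{k-2})$. By injectivity it is zero in $M_{\mathrm{dR},p}(g)$. Linear independence of $g$ and $V(g)$ then forces $\alpha=\alpha'$ and $\delta=\delta'$.

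I expect the main obstacle to be justifying that $V(g)$ and $g$ live in the common space over $W_{2}^{\circ}$ with $\theta^{k-1}$ compatible with $D^{k-1}$ on $q$-expansions; this is where the cited results of Coleman and Candelori--Castella are used.
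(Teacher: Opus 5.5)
Your route is the paper's: write $[D^{k-1}(\mc{F}_{a_{F^{+}}(1)})]=Ag+BV(g)$, lift the difference to $\theta^{k-1}(G)$ on $W_{2}^{\circ}$, and integrate $q$-expansions. Your uniqueness argument is correct and spells out what the paper leaves implicit: apply $\theta^{k-1}$ to the difference of two solutions, then use injectivity of restriction to $W_{2}^{\circ}$ and the basis $\{g,V(g)\}$.

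\textbf{The gap is the constant term.} You rightly notice it (the paper's one-line derivation does not address it either), but you do not close it. Integrating only gives $\Phi:=\mc{F}_{a_{F^{+}}(1)}-AE_{g}-BE_{g\vert V_{p}}=G+c$ for some $c\in K_{p}$, and the theorem needs $c=0$. Your suggested fixes fail for $k>2$. A nonzero constant is not an overconvergent modular form of weight $2-k\neq 0$, because the weight of a nonzero $p$-adic modular form is determined by its $q$-expansion (Serre, Katz). Consequently $\ker\theta^{k-1}=0$ on $H^{0}(W_{2}^{\circ},\underline{\omega}^{2-k})$. So $G$ is uniquely determined and cannot be shifted by a constant, and nothing is available to ``absorb'' $c$. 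Your remark that $D^{k-1}$ kills both constant terms is exactly why the identity $D^{k-1}\Phi=\theta^{k-1}G$ says nothing about them, so it cannot show they agree. Only for $k=2$, where constants are forms of weight $0$, does your fix work.

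\textbf{The missing idea is that $c$ is incompatible with the Hecke eigensystem of $g$.} Take a prime $\ell\nmid Np$ and use the following facts.
\begin{itemize}
\item The relation $T_{\ell}F=\ell^{1-k}\overline{a_{g}(\ell)}F+h_{\ell}$ of Theorem~\ref{thm. hecke action on harmonic} holds, with $h_{\ell}=T_{\ell}\mc{F}_{a_{F^{+}}(1)}-\ell^{1-k}a_{g}(\ell)\mc{F}_{a_{F^{+}}(1)}\in M^{!}_{2-k}(\Gamma_{1}(N),K)$.
\item $T_{\ell}E_{g}=\ell^{1-k}a_{g}(\ell)E_{g}$ and $T_{\ell}E_{g\vert V_{p}}=\ell^{1-k}a_{g}(\ell)E_{g\vert V_{p}}$; this uses that $a_{g}(\ell)$ and $\chi(\ell)$ are real.
\item Hence $T_{\ell}\Phi=\ell^{1-k}a_{g}(\ell)\Phi+h_{\ell}$, whereas $T_{\ell}$ multiplies a constant by $1+\chi(\ell)\ell^{1-k}$.
\item $T_{\ell}$ preserves overconvergent forms on $W_{2}^{\circ}$. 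Also $G$ has nebentypus $\chi$ by injectivity of $\theta^{k-1}$, so $T_{\ell}$ acts on its $q$-expansion by the usual formula.
\end{itemize}
Therefore the weight $2-k$ overconvergent form $T_{\ell}G-\ell^{1-k}a_{g}(\ell)G-h_{\ell}$ has constant $q$-expansion $c\,(\ell^{1-k}a_{g}(\ell)-1-\chi(\ell)\ell^{1-k})$. By the weight argument above, it vanishes. Now choose $\ell$ with $a_{g}(\ell)\neq\ell^{k-1}+\chi(\ell)$; any sufficiently large $\ell$ works, by the Ramanujan--Petersson bound. This forces $c=0$, and with this step inserted your existence proof is complete.
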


We note that $p$-adic constants $\gamma_{g}$ and $\delta_{g}$ in Theorem \ref{thm. over mock} are the same as those in Theorem \ref{thm. p-adic mock gene} and Theorem \ref{def alpha_g} because an overconvergent modular form is a $p$-adic modular form.

\section{Proof of the main theorems}
Let $K$ be a number field and $g \in S_{k}(\Gamma_{0}(N), \chi, K)$ be a normalized newform. Throughout this section, we assume that $p\nmid N$ and all the Fourier coefficients of $g$ are real. Let $F \in H_{2-k}(\Gamma_{1}(N), K)$ be a harmonic Maass form that is good for $g$.

\begin{thm}
 Let $g \in S_{k}(\Gamma_{0}(N), \chi, K)$ be a normalized newform with real Fourier coefficients. Suppose that $p\nmid 6N$, and $N \geq 5$. We also assume that $\{g, V(g)\}$ is a basis of $M_{\mathrm{dR}, p}(g)$. Then, $\delta_{g} \not=0$ holds.
\end{thm}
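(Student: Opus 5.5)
The approach is to read $\delta_{g}$ off the decomposition $[D^{k-1}(\mc{F}_{a_{F^{+}}(1)})]=Ag+BV(g)$ in $M_{\mathrm{dR},p}(g)$ and to show that $B\neq 0$.

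First, we identify $\delta_{g}$ with $-B$. The construction preceding Theorem \ref{thm. over mock} produces
\begin{align*}
D^{k-1}(\mc{F}_{a_{F^{+}}(1)})-Ag-BV(g)=\theta^{k-1}(G)
\end{align*}
for some $G\in H^{0}(W_{2}^{\circ},\underline{\omega}^{-k+2})$. Dividing $q$-expansions by $D^{k-1}$, the series $F^{+}-(a_{F^{+}}(1)+A)E_{g}-BE_{g\vert V_{p}}$ is an overconvergent modular form. Here we use that $D^{k-1}(g\vert V_{p})=p^{k-1}\,(E_{g\vert V_p}$-compatible normalization$)$, and we fix the convention for $E_{g\vert V_{p}}$ consistently with Theorem \ref{thm. over mock}. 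By the uniqueness in Theorem \ref{thm. over mock}, this gives $\gamma_{g}=a_{F^{+}}(1)+A$ and $\delta_{g}$ equal to $B$ up to a sign (or a nonzero normalizing factor). Hence $\delta_{g}=0$ if and only if $B=0$.

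Second, suppose for contradiction that $B=0$. Then $[D^{k-1}(\mc{F}_{a_{F^{+}}(1)})]=Ag$ in $M_{\mathrm{dR},p}(g)=M_{\mathrm{dR}}(g)\otimes_{K}K_{p}$. By Theorem \ref{thm. F^+ g are basis}, which uses the reality of the coefficients of $g$ (via Theorem \ref{thm. hecke action on harmonic}) together with the pairing argument of Theorem \ref{linearly independent in S^!}, the classes $[D^{k-1}(\mc{F}_{a_{F^{+}}(1)})]$ and $g$ form a $K$-basis of $M_{\mathrm{dR}}(g)$. A $K$-basis remains a $K_{p}$-basis after the flat base change $\otimes_{K}K_{p}$. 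So the relation $[D^{k-1}(\mc{F}_{a_{F^{+}}(1)})]-Ag=0$ with coefficient $1$ in front of the first basis vector is impossible. Therefore $B\neq 0$, and $\delta_{g}\neq 0$.

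The main obstacle I expect is the compatibility in the second step: making sure the class of $D^{k-1}(\mc{F}_{a_{F^{+}}(1)})$ in $\mathbb{H}^{1}_{\mathrm{par}}(X_{K},\mathcal{L}^{k-2})$, base-changed to $K_{p}$ and restricted to $W_{2}^{\circ}$, is the same element used in the $p$-adic decomposition. This needs Candelori's isomorphism to commute with base change and with the injective restriction map to $\mathbb{H}^{1}(W_{2}^{\circ},\mathcal{L}^{k-2})$ (the Coleman results cited in Subsection \ref{subsec. par p-adic}). The hypotheses $p\nmid 6N$ and $N\geq 5$ enter here, guaranteeing that $X_{1}(N)$ is a fine moduli scheme over $R_{p}$. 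I would check that both isomorphisms are given by the same $q$-expansion recipe, namely the map from a weakly holomorphic or overconvergent form $h$ to the class of $h$ in $\underline{\omega}^{k}\cong\underline{\omega}^{k-2}\otimes\Omega^{1}$. Then compatibility is immediate from the $q$-expansion principle.
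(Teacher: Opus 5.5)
Your proposal is correct and follows the paper's route: $\delta_g$ is the $V(g)$-coefficient of $[D^{k-1}(\mc{F}_{a_{F^{+}}(1)})]$ in the basis $\{g,V(g)\}$, and it cannot vanish because $[D^{k-1}(\mc{F}_{a_{F^{+}}(1)})]$ and $g$ are $K$-linearly independent by Theorem \ref{thm. F^+ g are basis}, which persists after $\otimes_K K_p$. The hedged normalization remark is unnecessary and, as written, the factor $p^{k-1}$ is wrong: with the paper's convention $E_{g\vert V_p}=\sum n^{1-k}a_{g\vert V_p}(n)q^n$ one has $D^{k-1}E_{g\vert V_p}=g\vert V_p$ exactly, so $\delta_g$ equals the coefficient $B$ with no scaling factor (this matches Section 4.2 and the main proof; the $-B$ in the introduction is a sign slip in the paper).
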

\begin{proof}
Since all Fourier coefficients of $g$ are real, $D^{k-1}(\mc{F}_{a_{F^{+}}(1)})$ is in $M_{\mathrm{dR}}(g)$. By the assumption that $\{g, V(g)\}$ is a basis of $M_{\mathrm{dR}, p}(g)$, there exist unique $\gamma_{g} \in a_{F^{+}}(1)+K_{p}$ and $\delta_{g} \in K_{p}$ such that $\mc{F}_{\gamma_{g}, \delta_{g}}$ is an overconvergent modular form. In particular, we write $\gamma_{g}=a_{F^{+}}(1)+\alpha_{g}$ for some $p$-adic number $\alpha_{g}$. By the definition of $\gamma_{g}$ and $\delta_{g}$, the following equality holds in $M_{\mathrm{dR}, p}(g)$:
\begin{align*}
    [D^{k-1}(\mc{F}_{a_{F^{+}}(1)})]=\alpha_{g}g+\delta_{g}V(g).
\end{align*}
Hence, it is enough to show that $[D^{k-1}(\mc{F}_{a_{F^{+}}(1)})]$ and $g$ are linearly independent in $M_{\mathrm{dR}}(g)$. This is already proved in Theorem \ref{thm. F^+ g are basis}.
\end{proof}

Next, we consider the case where $g$ has complex multiplication. From now on, we assume that $g \in S_{k}(\Gamma_{0}(N), \chi,  K)$ has complex multiplication by an imaginary quadratic field $L$.  Then, $D^{k-1}(F) \in S^{!}_{k}(\Gamma_{1}(N), K)$ by Theorem \ref{thm. coef alg}.

\begin{corollary}
 Let $g \in S_{k}(\Gamma_{0}(N), \chi, K)$ be a normalized newform with real Fourier coefficients. Suppose that $p\nmid 6N$, and $N \geq 5$. We also assume that $g$ has complex multiplication by an imaginary quadratic field in which $p$ is inert. Then, $\delta_{g} \not=0$ holds.
\end{corollary}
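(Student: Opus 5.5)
The corollary will follow from the main theorem once we know that, under the CM hypothesis with $p$ inert, $\{g, V(g)\}$ is a basis of $M_{\mathrm{dR}, p}(g)$. The plan is to verify this basis condition through the criterion of \cite[Proposition 3.4]{candelori2017geometric}, as explained in Remark \ref{remark cm basis}, and then invoke the theorem just proved. I will also check that the constant $\delta_{g}$ of Theorem \ref{def alpha_g} agrees with the one in Theorem \ref{thm. over mock}.

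First, I compute the roots $\beta, \beta'$ of $X^{2}-a_{g}(p)X+\chi(p)p^{k-1}$. Since $g$ has CM by $L$ and $p$ is inert in $L$, we have $\chi_{L}(p)=-1$. The CM relation then forces $a_{g}(p)=0$, at least for $p$ outside a finite set; more precisely, the CM theory of Hecke characters gives $a_{g}(p)=0$ for every inert $p\nmid N$. Hence the polynomial is $X^{2}+\chi(p)p^{k-1}$, and its roots satisfy $\beta'=-\beta$ with $\beta^{2}=-\chi(p)p^{k-1}$. As $\beta\neq 0$, we get $\beta\neq\beta'$, so condition (1) holds.

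Both roots have valuation $v_{p}(\beta)=v_{p}(\beta')=(k-1)/2$. For $k\geq 2$ this gives $(k-1)/2<k-1$, so \cite[Lemma 6.3]{coleman1996classical} yields $g_{\beta'}\notin\Im(\theta^{k-1})$, which is condition (2). Therefore $\{g, V(g)\}$ is a basis of $M_{\mathrm{dR}, p}(g)$.

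Now all hypotheses of the main theorem are satisfied: $g$ has real coefficients, $p\nmid 6N$, $N\geq 5$, and the basis condition holds. The theorem therefore gives $\delta_{g}\neq 0$ for the constant of Theorem \ref{thm. over mock}.

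Finally, I identify this constant with the $\delta_{g}$ of Theorem \ref{def alpha_g}, which is the question raised in \cite{bringmann2012mock}. By Theorem \ref{thm. coef alg}, $a_{F^{+}}(1)\in K$. In the CM inert case it is also known that $\gamma_{g}=0$, i.e. $\alpha_{g}=-a_{F^{+}}(1)$. Since overconvergent forms are $p$-adic modular forms, $\mathcal{F}_{\gamma_{g},\delta_{g}}$ is $p$-adic modular. By the uniqueness in Theorem \ref{def alpha_g}, the two $\delta_{g}$'s coincide.

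The main obstacle is the vanishing $a_{g}(p)=0$ for the specific inert prime $p$, rather than merely for a density-one set. I would derive it from the description of $g$ as the theta series of a Hecke Grössencharacter of $L$: the coefficient $a_{g}(p)$ is a sum over ideals of norm $p$, and there are none when $p$ is inert and unramified, which holds since $p\nmid N$.
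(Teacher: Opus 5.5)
Your proposal is correct and follows the paper's proof, which consists only of citing Remark \ref{remark cm basis}. In the CM-inert case $\beta'=-\beta$ with $v_{p}(\beta')=(k-1)/2<k-1$, so the Candelori--Castella criterion together with Coleman's lemma shows that $\{g,V(g)\}$ is a basis, and the main theorem then applies; your derivation of $a_{g}(p)=0$ from the Hecke-character description and your identification of the two constants $\delta_{g}$ just spell out details the paper leaves implicit.
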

\begin{proof}
This is clear because $\{g, V(g)\}$ is a basis of $M_{\mathrm{dR}, p}(g)$ when $g$ has complex multiplication by an imaginary quadratic field in which $p$ is inert by Remark \ref{remark cm basis}.
\end{proof}

\bibliographystyle{amsplain}
\bibliography{References}

\end{document}